\newtheorem{theorem}{Theorem}
\newtheorem{lemma}[theorem]{Lemma}
\newtheorem{corollary}[theorem]{Corollary}
\newcommand{\Fan}{\mathop{\mathrm{Fan}}}
\newcommand{\CF}{\mathop{\mathrm{ComponentFamily}}}
\newcommand{\Family}{\mathcal{F}}
\newcommand{\Sparse}{\mathop{\mathrm{Sparse}}}
\newcommand{\Apex}{\mathop{\mathrm{Apex}}}
\newcommand{\BigSet}{\left\{\frac{i}{i+1}~\Big|~ i\ge 0\right\}\cup
\left\{\frac{3i+2j}{2i+j+1}~\Big|~ i\ge 1, j\in\{0,1,2\}\right\}}
\newcommand{\sigmaf}{\sigma_{\Family}}
\newcommand{\rhof}{\rho_{\Family}}
\title{Densities of Minor-Closed Graph Families}
\author{David Eppstein\\
\small Computer Science Department\\[-0.8ex]
\small University of California, Irvine\\[-0.8ex]
\small Irvine, California, USA}
\begin{document}
\maketitle

\begin{abstract}
We define the limiting density of a minor-closed family of simple graphs $\Family$ to be the smallest number $k$ such that every $n$-vertex graph in $\Family$ has at most $kn(1+o(1))$ edges, and we investigate the set of numbers that can be limiting densities. This set of numbers is countable, well-ordered, and closed; its order type is at least $\omega^\omega$. It is the closure of the set of densities of density-minimal graphs, graphs for which no minor has a greater ratio of edges to vertices. By analyzing density-minimal graphs of low densities, we find all limiting densities up to the first two cluster points of the set of limiting densities, $1$ and $3/2$. For multigraphs, the only possible limiting densities are the integers and the superparticular ratios $i/(i+1)$.
\end{abstract}

\section{Introduction}

Planar simple graphs with $n$ vertices have at most $3n-6$ edges. Outerplanar graphs have at most $2n-4$ edges.  Friendship graphs have $3(n-1)/2$ edges. Forests have at most $n-1$ edges. Matchings have at most $n/2$ edges. Where do the coefficients $3$, $2$, $3/2$, $1$, and $1/2$ of the leading terms in these bounds come from?

Planar graphs, forests, outerplanar graphs, and matchings all form instances of \emph{minor-closed families of simple graphs}, families of the graphs with the property that any \emph{minor} of a graph $G$ in the family (a simple graph formed from $G$ by contracting edges and removing edges and vertices) remains in the family. The friendship graphs (graphs in the form of $(n-1)/2$ triangles sharing a common vertex) are not minor-closed, but are the maximal graphs in another minor-closed family, the graphs formed by adding a single vertex to a matching. For any minor-closed family $\Family$ of simple graphs there exists a number $k$ such that every $n$-vertex graph in $\Family$ has at most $kn(1+o(1))$ edges~\cite{Kos-Comb-84,Tho-MPCPS-84,Tho-JCTB-01}. We define the \emph{limiting density} of $\Family$ to be the smallest number $k$ with this property. In other words, it is the coefficient of the leading linear term in the \emph{extremal function} of $\Family$, the function that maps a number $n$ to the maximum number of edges in an $n$-vertex graph in $\Family$.
We may rephrase our question more formally, then, as: which numbers can be limiting densities of minor-closed families?

As we show,  the set of limiting densities is countable, well-ordered, and topologically closed (Theorem~\ref{thm:cwoc}). Additionally, the set of limiting densities of minor-closed graph families is the closure of the set of densities of a certain family of finite graphs, the \emph{density-minimal graphs} for which no minor has a greater ratio of edges to vertices (Theorem~\ref{thm:dd}). To prove this we use a separator theorem for minor-closed families~\cite{AloSeyTho-JAMS-90} to find density-minimal graphs that belong to a given minor-closed family, have a repetitive structure that allows them to be made arbitrarily large, and are close to maximally dense.

By analyzing the structure of density-minimal graphs, we can identify the smallest two cluster points of the set of limiting densities, the numbers $1$ and $3/2$, and all of the other possible limiting densities that are at most $3/2$. Specifically, the limiting densities below 1 are the \emph{superparticular ratios} $i/(i+1)$ for $i=0,1,2,\dots$, and the corresponding density-minimal graphs are the $(i+1)$-vertex trees. The limiting densities between 1 and $3/2$ are the rational numbers of the form $3i/(2i+1)$, $(3i+2)/(2i+2)$, and $(3i+4)/(2i+3)$ for $i=1,2,3,\dots$; the corresponding density-minimal graphs include the friendship graphs and small modifications of these graphs. Beyond $3/2$ the pattern is less clear, but each number $2-1/i$ is a cluster point in the set of limiting densities, and $2$ is a cluster point of cluster points. Similarly the number $3$ is a cluster point of cluster points of cluster points, etc. We summarize this structure in Theorem~\ref{thm:bigset}.

One may also apply the theory of graph minors to families of \emph{multigraphs} allowing multiple edges between the same pair of vertices as well as multiple self-loops connecting a single vertex to itself. In this case the theory of limiting densities and density-minimal graphs is simpler: the only possible limiting densities for minor-closed families of multigraphs are the integers and the superparticular ratios, and the only limit point of the set of limiting densities is the number~$1$ (Theorem~\ref{thm:multi}).

\section{Related work}

\subsection{Limiting densities of minor-closed graph families}

A number of researchers have investigated the limiting densities of minor-closed graphs. It is known that every minor-closed family has bounded limiting density~\cite{Mad-MA-67} and that this density is at most $O(h\sqrt{\log h})$ for graphs with an $h$-vertex forbidden minor~\cite{Kos-Comb-84,Tho-MPCPS-84}. This asymptotic growth rate is tight: $K_h$-free graphs have limiting density $\Theta(h\sqrt{\log h})$, and the constant factor hidden in the $\Theta$-notation above is known~\cite{Tho-JCTB-01}.

There have also been similar investigations into the dependence of the limiting density of $H$-minor-free graphs on the number of vertices in $H$ when $H$ is  not a clique~\cite{KosPri-DM-08,MyeTho-Comb-05}. However these works have a different focus than ours: they concern either the asymptotic growth rate of the limiting density as a function of the forbidden minor size or, in some cases, the limiting densities or extremal functions of specific minor-closed families~\cite{ChuReeSey-ms-08,KosPri-DM-10,SonTho-JCTB-06,ThoWol-JCTB-08} rather than, as here, the structure of the set of possible limiting densities.

\subsection{Growth rates of minor-closed graph families}

Bernardi, Noy, and Welsh~\cite{BerNoyWel-JCTB-10} investigate a different set of real numbers defined from minor-closed graph families, their growth rates. The growth rate of a family of graphs is a number $c$ such that the number of $n$-vertex graphs in the family, with vertices labeled by a permutation of the numbers from $1$ to $n$, grows asymptotically as $n! c^{n+o(n)}$~\cite{SchZit-JCtB-94}. Bernardi et al. investigate the topological structure of the set of growth rates, show that this set is closed under the doubling operation, and determine all growth rates that are at most 2.25159.

\subsection{Upper density of infinite graphs}

For arbitrary graphs, not belonging to a minor-closed family, the density is often defined differently, as the ratio
$$|E| / \binom{|V|}{2} = \frac{2|E|}{|V|(|V|-1)}$$
of the number of edges that are present in the graph to the number of positions where an edge could exist. (This definition is not useful for minor-closed families: for any nontrivial minor-closed family, the density defined in this way necessarily approaches zero in the limit of large $n$.)

This definition of density can be extended to infinite graphs as the \emph{upper density}: the upper density of an infinite graph $G$ is the supremum of numbers $\alpha$ with the property that $G$ contains arbitrarily large subgraphs with density $\alpha$. Although defined in a very different way to our results here, the set of possible upper densities is again limited to a well-ordered countable set, consisting of $0$, $1$, and the superparticular ratios $i/(i+1)$~\cite[Exercise 12, p. 189]{Die-05}.

\section{Density-minimal graphs}
\label{sec:density-minimal}

We define the \emph{density} of a simple graph $G$, with $m$ edges and $n$ vertices, to be the ratio $m/n$. We say that $G$ is \emph{density-minimal} if no proper minor of $G$ has equal or greater density. Equivalently, a connected graph $G$ is density-minimal if there is no way of contracting some of the edges of $G$, compressing multiple adjacencies to a single edge, and removing self-loops, that produces a smaller graph with greater density: edge removals other than the ones necessary to form a simple graph are not helpful in producing dense minors. The \emph{rank} of a connected graph $G$, again with $m$ edges and $n$ vertices, is the number $m+1-n$ of independent cycles in $G$; we say that $G$ is \emph{rank-minimal} if no proper minor of $G$ has the same rank. Every density-minimal graph is also rank-minimal, because a smaller graph with equal rank would have greater density.

\begin{figure}[t]
\centering\includegraphics{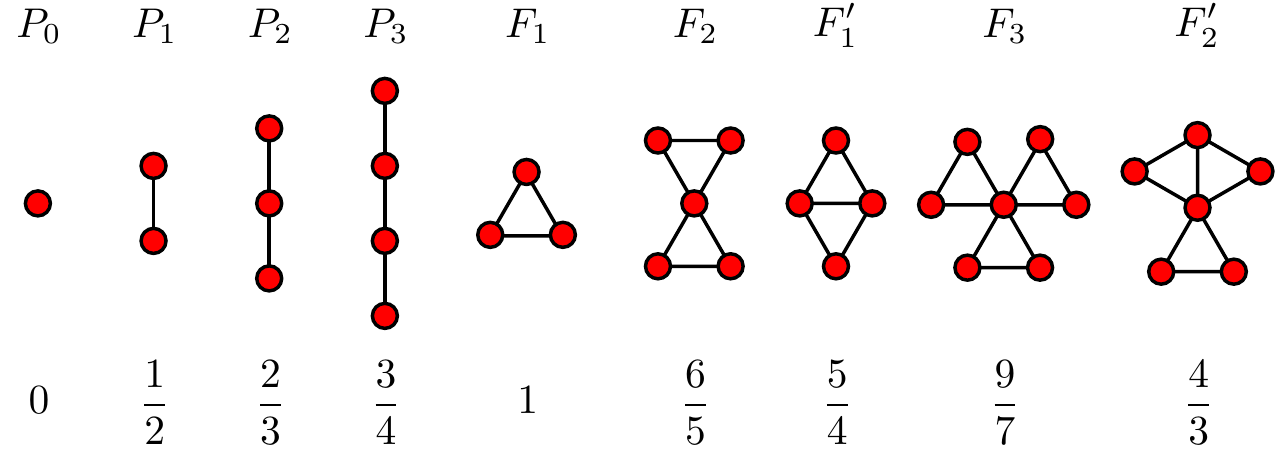}
\caption{Some density-minimal graphs and their densities.}
\label{fig:density-minimal}
\end{figure}

\subsection{Examples of density-minimal graphs}

Every tree is density-minimal: a tree with $m$ edges (such as the path $P_m$) has density $m/(m+1)$, and contracting edges in a tree can only produce a smaller tree with a smaller density. Additional examples of density-minimal graphs are provided by the \emph{friendship graphs} $F_i$ formed from a set of $i$ triangles by identifying one vertex from each triangle into a single supervertex;  these graphs are famous as the finite graphs in which every two vertices have exactly one common neighbor~\cite{ErdRenSos-SSMH-66}, but they also have the property that $F_i$ is density-minimal with density $3i/(2i+1)$. For, if we contract edges in $F_i$ to form a minor $H$, then $H$ must itself have the form of a friendship graph together possibly with some additional degree-one vertices connected to the central vertex. Removing the degree-one vertices can only increase the density of $H$, but once this is done $H$ must itself be a friendship graph with fewer triangles than $F_i$ and smaller density. This shows that every minor of $F_i$ has smaller density, so $F_i$ is density-minimal.

Let $F'_i$ be formed from the friendship graph $F_i$ by adding one more vertex, whose two neighbors are the two endpoints of any edge in $F_i$. Then a similar argument shows that $F'_i$ is density-minimal with density $(3i+2)/(2i+2)$. If a second vertex is added in the same way to produce a graph $F''_i$, then $F''_i$ is density-minimal with density $(3i+4)/(2i+3)$. However, adding a third vertex in the same way does not generally produce a density-minimal graph: its density is $3/2$, and (if $i>2$) one of the triangles of the friendship graph from which it was formed can be removed leaving a smaller minor with the same density.

These are not the only density-minimal graphs with these densities---a set of $i\ge 3$ triangles can be connected together at shared vertices to form cactus trees other than the friendship graphs with the same density---but as we now show, their densities are the only possible densities of density-minimal graphs in this numerical range.

\subsection{Bounding the rank of low-density density-minimal graphs}

In order to determine the possible densities of density-minimal graphs, it is helpful to have the following technical lemma, which allows us to restrict our attention to graphs of low rank.

\begin{lemma}
\label{lem:rank-limit-3/2}
Every biconnected graph of rank four or higher contains a minor of density at least $3/2$.
\end{lemma}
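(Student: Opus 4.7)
I will prove this by induction on $n = |V(G)|$. The base case is immediate: biconnectedness together with rank at least $4$ gives $|E(G)| \ge n+3$, so when $n \le 6$ the density of $G$ itself is at least $(n+3)/n \ge 3/2$, and $G$ serves as its own minor.

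For the inductive step, take $n \ge 7$. If $G$ has minimum degree at least $3$ then $|E(G)| \ge 3n/2$ and $G$ is the desired minor. Otherwise let $v$ be a vertex of degree $2$ with neighbors $u$ and $w$. When $u$ and $w$ are nonadjacent, the pair $\{u,v\}$ cannot be a $2$-cut of $G$ (in $G - u$ the vertex $v$ is a leaf, so deleting it preserves connectivity), and the contraction $G/uv$ is a simple biconnected graph of the same rank with one fewer vertex, to which induction applies. When $u$ and $w$ are adjacent, the triangle $uvw$ sits inside $G$ and $G - v$ is biconnected because the edge $uw$ keeps $u$ and $w$ joined after any further vertex deletion; its rank is exactly one less than that of $G$, so whenever $G$ has rank at least $5$ the induction hypothesis applies to $G - v$.

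The remaining case---rank exactly $4$ with every degree-$2$ vertex having adjacent neighbors---is where I expect the main obstacle. My plan is to reduce further to the situation in which every edge of $G$ lies in a triangle, either by contracting any edge whose endpoints have no common neighbor and do not form a $2$-cut (which is a safe rank-preserving operation), or by decomposing across a $2$-cut into its lobes and applying the induction hypothesis to any lobe that already has rank at least $4$. Once reduced, I would invoke the auxiliary inequality that a biconnected simple graph in which every edge lies in a triangle satisfies $|E(G)| \ge 2n - 3$; combined with $|E(G)| = n+3$ this forces $n \le 6$ and contradicts the assumption $n \ge 7$. Proving this auxiliary bound---for instance by constructing a spanning $2$-tree subgraph of $G$, or by a delicate induction in which the deletion of a degree-$2$ apex may also require removing its triangle's base edge before reapplying the hypothesis---is the technically hardest step of the argument.
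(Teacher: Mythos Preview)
Your induction up to the ``remaining case'' (rank exactly $4$, $n\ge 7$, every degree-$2$ vertex with adjacent neighbours) is correct, but the plan you sketch for that case cannot succeed, because the auxiliary inequality is false. Take the cycle of $k$ triangles: a $k$-cycle $v_1\cdots v_k$ together with apex vertices $u_1,\dots,u_k$ where $u_i$ is adjacent to $v_i$ and $v_{i+1}$. This graph is simple, biconnected, and every edge lies in a triangle, yet it has $2k$ vertices and only $3k$ edges; for $k\ge 4$ one gets $|E|=3k<4k-3=2n-3$. So no argument that discards the rank hypothesis and appeals only to ``every edge lies in a triangle'' can force $n\le 6$. (This is exactly the family the paper exhibits as biconnected graphs of arbitrarily high rank whose densest minor has density $3/2$.) Your reduction to that situation also has a hole: when an edge $xy$ not lying in a triangle has $\{x,y\}$ as a $2$-cut, the two lobes have ranks summing to $4$ with each at least $1$, so neither lobe has rank $\ge 4$ and your induction hypothesis is unavailable.

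The paper handles the remaining case by exploiting the rank-$4$ constraint directly rather than via a general edge-count inequality. After reducing to a rank-$4$ biconnected $G$ admitting no rank- and biconnectivity-preserving contractions, it removes two degree-$2$ vertices in succession (arguing that each sits in a triangle and that removal keeps the graph biconnected), arriving at a biconnected rank-$2$ graph, which must be a theta graph $\Theta(a,b,c)$. A short case analysis on the path lengths $a,b,c$ then exhibits an edge of $G$ that could have been contracted after all, contradicting minimality and forcing $n\le 6$.
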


\begin{proof}
Let $G$ be biconnected with rank four or higher.
We perform an open ear decomposition of the graph (a partition of the edges into a sequence of subgraphs, the first of which is a cycle and the rest of which are simple paths, where the endpoints of each path belong to previous components of the decomposition)~\cite{Whi-TAMS-32}. The first four ears of this decomposition form a biconnected subgraph of $G$ with rank exactly four, so by replacing $G$ with this subgraph we may assume without loss of generality that the rank of $G$ is four. We may also assume without loss of generality that $G$ has no edges that could be contracted in a way that preserves both its rank and its biconnectivity, because otherwise we could replace $G$ with the graph formed by performing these contractions; in particular, this implies that every degree-two vertex of $G$ is part of a triangle. Additionally, no two degree-two vertices can be adjacent, for if they were then the third vertex of their triangle would be an articulation point, contradicting biconnectivity.

\begin{figure}[t]
\centering\includegraphics{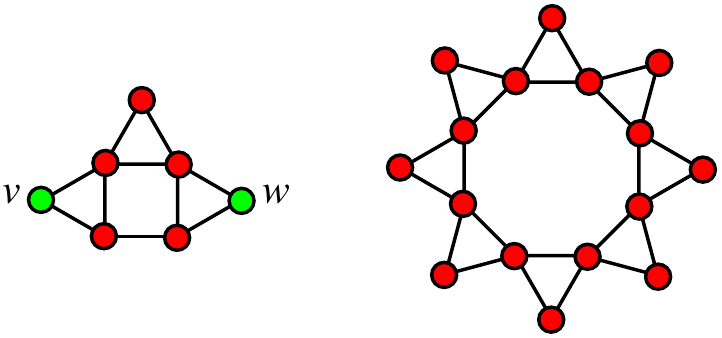}
\caption{Left: a rank-four graph with seven vertices. After removing the two degree-two vertices $v$ and $w$, the remaining graph $G''$ is a rank-two theta graph $\Theta(1,2,3)$; two of the edges on the length-three path of the theta are covered by the triangles containing $v$ and $w$, but the remaining edge at the bottom of the drawing can be contracted to produce a smaller biconnected rank-four graph. Right: a cycle of triangles, an example of a high-rank biconnected graph in which all minors have density at most $3/2$.}
\label{fig:rank-density}
\end{figure}

We now assert that $G$ has at most six vertices, and therefore has density at least $3/2$. For, suppose that $G$ had $n$ vertices with $n\ge 7$, and $n+3$ edges. In this case, since the number of edges is less than $3n/2$, there must be a vertex $v$ with degree two. Removing $v$ leaves a smaller graph $G'$ with $n-1\ge 6$ vertices and $n+1$ edges; again, the number of edges is less than three-halves of the number of vertices, so there must be a vertex $w$ that has degree two in $G'$ ($w$ cannot have degree one, because if it did then $G$ would have two adjacent degree-two vertices). If $w$ is not adjacent to $v$ in $G$, then the two neighbors of $w$ in $G'$ form a triangle as they do in $G$, for the same reason that the neighbors of $v$ form a triangle; if on the other hand $w$ is adjacent to $v$, then its two remaining neighbors in $G'$ must again form a triangle or else the edge $wx$ where $x$ is not adjacent to $v$ could be contracted preserving rank and biconnectivity.
Removing $w$ from $G'$ leaves a second smaller graph $G''$ with rank two. Additionally, since both $v$ and $w$ belonged to triangles of $G$, their removal cannot create any articulation points in $G''$, so $G''$ is biconnected.

But (by ear decomposition again) the only possible structure for a biconnected rank-two graph such as $G''$ is a theta graph $\Theta(a,b,c)$, in which two degree-three vertices are connected by three paths of lengths $a$, $b$, and $c$ respectively. If $G$ has seven or more vertices, then $G''$ has five or more vertices, and $a+b+c\ge 6$. If one of the paths of the theta graph has length three or more, then only two of the edges of this path can be part of triangles containing $v$ and $w$, and the third edge of the path can be contracted in $G$ to produce a smaller biconnected rank-four graph, contradicting our assumption that no such contraction exists; this case is shown in Figure~\ref{fig:rank-density}(left). In the remaining case, $G''=\Theta(2,2,2)=K_{2,3}$; only two of the six edges of $G''$ can be part of triangles involving $v$ and $w$, and any one of the four remaining edges can be contracted in $G$ to produce a smaller biconnected rank-four graph, again contradicting our assumption.

These contradictions show that the number $n$ of vertices in $G$ is at most six; since its rank is four, its density $(n+3)/n$ must be at least $3/2$.
\end{proof}

Increasing the rank past four without increasing the connectivity does not increase the density past the $3/2$ threshold of Lemma~\ref{lem:rank-limit-3/2}: there exist biconnected graphs of arbitrarily high rank in which the densest minors have density $3/2$, namely the cycles of triangles shown in Figure~\ref{fig:rank-density}(right).

\begin{figure}[t]
\centering\includegraphics{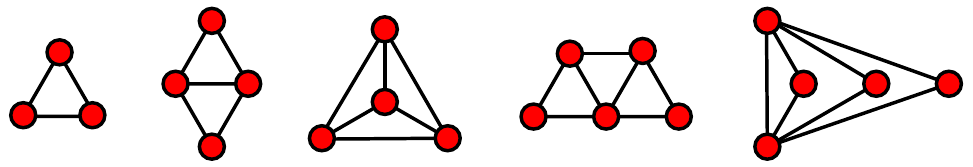}
\caption{The five rank-minimal biconnected graphs with rank between one and three.}
\label{fig:low-rank-blocks}
\end{figure}

A simple case analysis based on ear decompositions shows that there are exactly five rank-minimal biconnected graphs of rank between one and three: the triangle $F_1=K_3$, the diamond graph with four vertices and rank two, the complete graph $K_4$ with four vertices and rank three, and two different 2-trees with five vertices and rank three (Figure~\ref{fig:low-rank-blocks}).

\subsection{Classification of density-minimal graphs with low density}

\begin{lemma}
\label{lem:low-density}
Let a graph $G$ be density-minimal, with density $\Delta<3/2$. Then
$$\Delta\in\BigSet$$
and for each number $\Delta$ in this set there exists a density-minimal graph $G$ with density $\Delta$.
\end{lemma}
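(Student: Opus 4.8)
The plan is to pin down the block structure of a density-minimal graph $G$ of density $\Delta<3/2$ and then read off $\Delta$ as an explicit function of two integer parameters. A few easy reductions come first. If $G$ were disconnected it would have a densest component, a proper minor of density at least $\Delta$, which is forbidden; so $G$ is connected, with $K_1$ (density $0=0/1$) as the degenerate case. If $G$ has rank $r\ge 1$, deleting a degree-one vertex changes the density from $(n-1+r)/n$ to $(n-2+r)/(n-1)$, an increase of $(r-1)/\bigl(n(n-1)\bigr)\ge 0$; density-minimality therefore forbids leaves. The rank-zero case is then exactly the trees, with density $(n-1)/n=i/(i+1)$, each value realized by a path $P_i$; and a connected leafless graph of rank one is $2$-regular, hence a single cycle, which can be contracted to a shorter cycle of the same density unless it is $K_3$, giving the single value $1=3\cdot 1/(2\cdot 1+1)$.

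So suppose $r\ge 2$. The same computation, now applied to contracting a bridge (also rank-preserving and $n$-decreasing), shows $G$ has no bridges, so every block is $2$-connected of rank at least one. No block has rank four or more, since by Lemma~\ref{lem:rank-limit-3/2} it, or a proper minor of it, would be a minor of $G$ of density at least $3/2>\Delta$. Next I would show that every block $B$ of $G$ is rank-minimal: if a contraction $B/F$ had the same rank but fewer vertices, then contracting the same edges inside $G$ would yield a proper minor of $G$ of the same rank --- because rank is additive over blocks and the modified block still contributes the same amount --- contradicting the fact that a density-minimal graph is rank-minimal. Hence every block is one of the five rank-minimal $2$-connected graphs of rank at most three from Figure~\ref{fig:low-rank-blocks}, and no block is $K_4$, since a $K_4$ subgraph would again be a proper minor of density $3/2>\Delta$.

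The classification then follows by counting. For a connected bridgeless graph with $k$ blocks, $n=\sum_t n_t-(k-1)$, and each of the four admissible block types (triangle, diamond, and the two five-vertex $2$-trees) satisfies $n_t=r_t+2$, so $n=k+r+1$, $m=n-1+r=k+2r$, and $\Delta=(k+2r)/(k+r+1)$. Writing $s=r-k$, which is nonnegative since every block has rank at least one, this becomes $\Delta=(3k+2s)/(2k+s+1)$, and the inequality $\Delta<3/2$ is equivalent to $s\le 2$. Taking $i=k\ge 1$ and $j=s\in\{0,1,2\}$ places $\Delta$ in the asserted set. Conversely, the superparticular values are realized by paths, and the values $(3i+2j)/(2i+j+1)$ by the graphs $F_i$, $F'_i$, $F''_i$ already shown to be density-minimal in Section~\ref{sec:density-minimal}.

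The crux --- and the step most likely to need care --- is the reduction to blocks, in particular the claim that every block of a density-minimal graph is itself rank-minimal. One has to verify that a rank-preserving contraction performed inside a single block lifts to a rank-preserving proper minor of the whole graph even when that contraction identifies two cut vertices of the block; the point is that the block-cut tree structure keeps the pieces of $G$ hanging off distinct cut vertices from interacting, so no cycles are created or destroyed outside the block being modified. Everything after the block structure is settled is pure arithmetic.
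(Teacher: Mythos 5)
Your proof is correct and follows essentially the same route as the paper: reduce to the connected, bridgeless case, bound block ranks by Lemma~\ref{lem:rank-limit-3/2}, argue that each block must be one of the rank-minimal biconnected graphs of rank at most three, and then read off the density from the block decomposition. The paper handles the low-density case by splitting on $\Delta<1$ versus $1\le\Delta<3/2$ rather than on rank, and it enumerates the four admissible block multisets one at a time; your unified parameterization $\Delta=(3k+2s)/(2k+s+1)$ with $s=r-k\ge 0$, together with the observation that $\Delta<3/2$ is equivalent to $s\le 2$, is a cleaner way to package the same arithmetic and automatically rules out the offending block combinations (two rank-three blocks, three rank-two blocks, etc.) that the paper excludes by inspection. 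You also make explicit two small points the paper leaves implicit: that $K_4$ cannot appear as a block because its density is already $3/2$, and that the ``each block is rank-minimal'' step requires checking that a rank-preserving contraction inside one block lifts to a rank-preserving proper minor of $G$ even when it identifies two cut vertices. Your resolution of the latter is right: since $B$ is a maximal biconnected subgraph, no path outside $B$ joins two vertices of $B$, so no new cycles are created and the automatic suppression of parallel edges under contraction is the same in $G$ as in $B$. No gap.
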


\begin{proof}
First, suppose that $\Delta<1$. Then $G$ must be acyclic and connected, for if it had a cycle it would have a triangle minor with density $1$ and not be density-minimal, and if it were disconnected then the densest of its components would be a minor with density at least as great as that of $G$ itself. But an acyclic connected graph is a tree, and has density $i/(i+1)$ where $i$ is its number of edges.

In the remaining cases, $1\le\Delta<3/2$. $G$ must be bridgeless, because any bridge could be contracted producing a denser graph.
By Lemma~\ref{lem:rank-limit-3/2}, each block (biconnected component) of $G$ must have rank at most three, for otherwise that block by itself would have a minor with density at least $3/2$, contradicting the assumption that $G$ is density-minimal. Additionally, each block must be rank-minimal, for otherwise $G$ itself would not be rank-minimal.  Therefore, each block must be a triangle, the diamond graph, or one of the two rank-three 2-trees. If there are two rank-three blocks, three rank-two blocks, or one rank-three block and one rank-two block, then those blocks alone (with the remaining blocks contracted away) would again form a minor with density at least $3/2$. The only remaining cases are a graph in which the blocks consist of $i$ triangles, with density $3i/(2i+1)$, a graph in which the blocks consist of $i$ triangles and one rank-two block, with density $(3i+5)/(2i+4)$, a graph in which the blocks consist of $i$ triangles and one rank-three block, with density $(3i+7)/(2i+5)$, or a graph in which the blocks consist of $i$ triangles and two rank-two blocks, with density $(3i+10)/(2i+7)$. Each of these densities belongs to the set specified in the lemma.

To show that each $\Delta$ in the set of densities stated in the lemma is the density of some density-minimal graph $G$, we need only recall the path graphs $P_i$, the friendship graphs $F_i$, and the graphs $F'_i$ ad $F''_i$ formed by adding one or two degree-two vertices to a friendship graph. As we have already argued, these graphs are density-minimal, and together they cover all the densities in the given set.
\end{proof}

The set of achievable densities allowed by Lemma~\ref{lem:low-density}, in numerical order up to the limit point $3/2$, is
$$0, \frac{1}{2}, \frac{2}{3}, \frac{3}{4}, \frac{4}{5}, \frac{5}{6}, \dots, 1,
\frac{6}{5}, \frac{5}{4}, \frac{9}{7}, \frac{4}{3},
\frac{15}{11}, \frac{11}{8}, \frac{18}{13}, \frac{7}{5}, \frac{24}{17}, \frac{17}{12}, \frac{27}{19}, \frac{10}{7}, \dots \frac{3}{2}.$$
Figure~\ref{fig:density-minimal} shows density-minimal graphs achieving some of these densities.

\section{Fans of graphs}

We now introduce a notation for constructing large graphs with a repetitive structure from a smaller model graph. Given a graph $G$, a proper subset $S$ of the vertices of $G$, and a positive integer $k$, we define the graph $\Fan(G,S,k)$ to be the union of $k$ copies of $G$, all sharing the same copies of the vertices in $S$ and having distinct copies of the vertices in $G\setminus S$. For instance:
\begin{itemize}
\item If $G$ is a triangle $uvw$, then $\Fan(G,\{u\},k)$ is the friendship graph $F_k$
\item For the same triangle $G=uvw$, $\Fan(G,\{u,v\},k)$ is a 2-tree formed by $k$ triangles sharing a common edge. Three of the graphs in Figure~\ref{fig:low-rank-blocks} take this form, for $k\in\{1,2,3\}$.
\item The complete bipartite graph $K_{a,b}$ is can be represented in multiple different ways as a fan: it is isomorphic to $\Fan(K_{d,b},S,a/d)$ where $d$ is any divisor of $a$ and $S$ is the $b$-vertex side of the bipartition of $K_{d,b}$, and symmetrically there is a representation as a fan for any divisor of $b$.
\item Two more examples are shown in Figure~\ref{fig:fans}.
\end{itemize}

\begin{figure}[t]
\centering\includegraphics{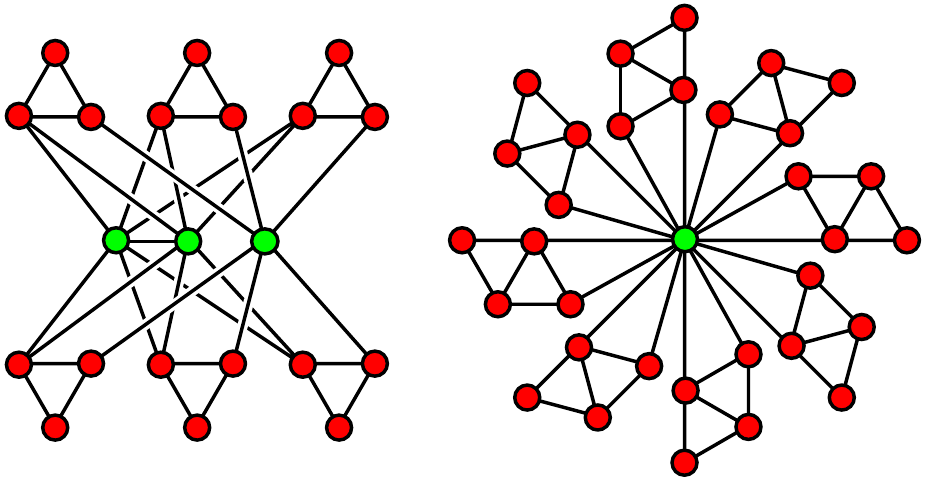}
\caption{Two fans of graphs.}
\label{fig:fans}
\end{figure}

\subsection{Basic observations about fans}

If $G$ has $n$ vertices and $S$ has $s$ vertices (where $s<n$ as we require $S$ to be a proper subset of the vertices), then $\Fan(G,S,k)$ has $k(n-s)+s=\Omega(k)$ vertices.  $\Fan(G,\emptyset,k)$ is the disconnected graph formed by $k$ disjoint copies of $G$;  however, if $G$ is connected and $S$ is nonempty then $\Fan(G,S,k)$ is also connected.

\begin{lemma}
\label{lem:fan-lb}
Every graph $\Fan(G,S,k)$ has at least $k$ vertices.
\end{lemma}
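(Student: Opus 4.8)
The plan is to read off the conclusion from the vertex count already recorded in the preceding paragraph. Writing $n=|V(G)|$ and $s=|S|$, the graph $\Fan(G,S,k)$ has exactly $k(n-s)+s$ vertices: the $k$ copies of $G\setminus S$ are vertex-disjoint and contribute $k(n-s)$ vertices, while the shared copy of $S$ contributes $s$ more.

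First I would invoke the standing hypothesis that $S$ is a \emph{proper} subset of $V(G)$, so that $s\le n-1$, i.e. $n-s\ge 1$. Since also $s\ge 0$ and $k\ge 1$, this gives
$$|V(\Fan(G,S,k))| = k(n-s)+s \ge k\cdot 1 + 0 = k,$$
which is the claimed bound. (It is worth noting in passing that the hypothesis forces $G$ to be nonempty, since $\emptyset$ has no proper subset, so $n\ge 1$ and the count $n-s\ge 1$ is never vacuous; the degenerate case $S=\emptyset$, where the fan is $k$ disjoint copies of $G$, is already covered since then $n-s=n\ge 1$.)

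There is essentially no obstacle here: the only thing to be careful about is that the bound genuinely relies on $S$ being proper rather than arbitrary, so that $n-s\ge 1$; without that requirement one could take $S=V(G)$ and the ``fan'' would collapse to a single copy of $G$. Given the definition as stated, the lemma is immediate.
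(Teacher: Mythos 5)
Your proof is correct and is essentially the paper's own argument: the paper also observes that $S$ being a proper subset of $V(G)$ guarantees at least one vertex outside $S$, which is replicated $k$ times, so the fan has at least $k$ vertices. You simply spell out the same reasoning via the explicit count $k(n-s)+s$.
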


\begin{proof}
This follows immediately from the requirement that $S$ be a proper subset of the vertices of $G$; there is at least one vertex that does not belong to $S$, and that is replicated $k$ times in $\Fan(G,S,k)$.
\end{proof}

As the following lemma shows, it is not very restrictive to consider only those fans in which the central subset $S$ forms a clique. The advantage of restricting $S$ in this way is that, when considering minors of the fan, we do not have to consider the ways in which such a minor might add edges between vertices of $S$.

\begin{lemma}
\label{lem:fan-clique}
Let $G$ be a graph, let $S$ be a subset of the vertices of $G$ such that $G\setminus S$ is connected and every vertex in $S$ has a neighbor in $G\setminus S$, and let $G'$ be the graph formed from $G$ by adding edges between every pair of vertices in $S$. Then there exists a constant $c$ (depending on $G$ and $S$) with the property that, for every $k>c$, $\Fan(G',S,k-c)$ is a minor of $\Fan(G,S,k)$.
\end{lemma}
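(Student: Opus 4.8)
The plan is to set aside a bounded number of the $k$ copies of $G$ inside $\Fan(G,S,k)$ as ``sacrificial'' copies, each of which gets contracted down in order to manufacture missing edges among the vertices of $S$, while the remaining $k-c$ copies are left completely untouched so that, once $S$ has been turned into a clique, they constitute exactly the $G'$-copies of $\Fan(G',S,k-c)$. Write $s=|S|$. If $s\le 1$ then $G'=G$ and the statement holds with $c=0$, so assume $s\ge 2$, put $c=s-1$, and fix an enumeration $v_1,\dots,v_s$ of $S$.

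The first step is to isolate the operation performed on a single sacrificial copy. Let $G_i$ denote the $i$-th copy of $G$ for $1\le i\le c$. Since $G_i\setminus S$ is a copy of $G\setminus S$ it is connected, so it can be contracted to a single vertex $z_i$ by a sequence of edge contractions; because every vertex of $S$ has a neighbour in $G\setminus S$, the vertex $z_i$ ends up adjacent to every vertex of $S$, and in particular to $v_i$. Contracting the edge $z_iv_i$ (absorbing $z_i$ into $v_i$) then makes $v_i$ adjacent to all of $S\setminus\{v_i\}$. The key observation is that the vertices of $G_i\setminus S$ are adjacent, in $\Fan(G,S,k)$, only to vertices of $S$ and to one another; so, up to deleting the now-superfluous interior of $G_i$, the net effect of this operation is simply to add some edges lying inside $S$, creating no edge incident to any other copy and deleting no edge among the vertices that survive.

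Next I would run this operation on the copies $G_1,\dots,G_c$ one after another. Their interiors are pairwise disjoint and disjoint from the other $k-c$ copies, so the contractions do not interfere with each other or with the untouched copies, and after suppressing the parallel edges and self-loops produced along the way we obtain a simple minor of $\Fan(G,S,k)$. In this minor, for every pair $\{v_i,v_j\}$ with $1\le i<j\le s$ we have $i\le s-1=c$, so the operation on $G_i$ has already made $v_i$ adjacent to $v_j$; hence $S$ induces a clique. The untouched copies still carry exactly the edges of $G\setminus S$ and the same attachments to $S$ as in $G$, so together with this clique they form $\Fan(G',S,k-c)$ (any extra $S$-edges are harmless, and in fact there are none since $S$ is already complete in $G'$). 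Because $k>c$ forces $k-c\ge 1$, the target graph is well defined, which finishes the argument.

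The only delicate part is the bookkeeping in the middle step: one must check that crushing a sacrificial copy never deletes an edge between two surviving vertices and never creates an edge reaching into the kept copies. Both points are exactly what the hypotheses and the structure of $\Fan$ provide — connectivity of $G\setminus S$ lets each sacrificial copy be contracted to a single vertex, the condition that every vertex of $S$ has a neighbour in $G\setminus S$ guarantees that this vertex sees all of $S$ (so that $s-1$ sacrificial copies really do suffice to complete the clique on $S$), and the definition of $\Fan$ confines the interior of each copy to being adjacent only to $S$. Everything else is routine.
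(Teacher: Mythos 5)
Your proof is correct and rests on the same mechanism as the paper's: sacrifice some copies of $G$ and contract each sacrificial interior (connected, since $G\setminus S$ is) onto a designated vertex of $S$, which then becomes adjacent to all of $S$. The only difference is in the bookkeeping: you always take $c=|S|-1$ (one sacrificial copy per vertex of $S$ except the last), whereas the paper takes $K$ to be a maximum clique of the subgraph induced by $S$ and contracts a copy onto each vertex of $S\setminus K$, giving the possibly smaller constant $c=|S\setminus K|$. Since the lemma only asserts that some constant $c$ exists, your coarser choice is perfectly adequate; the two constants even coincide in the paper's actual application, because there $S$ induces no edges and $K$ is a single vertex.
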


\begin{proof}
Let $K$ be a maximum clique in the subgraph induced in $G$ by $S$, and let $c=|S\setminus K|$. To form $\Fan(G',S,k-c)$ as a minor of $\Fan(G,S,k)$, simply contract one of the copies of $G$ onto each vertex in $S\setminus K$.
\end{proof}

For instance, in Figure~\ref{fig:fans}(left), the three central vertices do not form a clique, but they can be made into a clique by contracting one of the six copies of the outer subgraph into the rightmost of the three central vertices. Thus, in this example, we may take $c=1$.

\subsection{Densest minors of fans}

The following technical lemma will be used to compare the densities of minors of fans in which different copies of $G$ are contracted differently from each other.

\begin{lemma}
\label{lem:subst}
Let $a$, $b$, $c$, $d$, $e$, and $f$ be any six non-negative numbers, with $d+2e$, $d+2f$, and $d+e+f$ positive.
Then $(a+b+c)/(d+e+f)\le \max\{(a+2b)/(d+2e),(a+2c)/(d+2f)\}$, with equality only in the case that the two terms in the maximum are equal.
\end{lemma}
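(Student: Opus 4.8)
The plan is to recognize the left-hand side as a \emph{mediant} of the two fractions appearing in the maximum, and then to invoke the elementary fact that a mediant of two fractions with positive denominators lies weakly between them. First I would record the two algebraic identities $a+b+c=\tfrac12\bigl((a+2b)+(a+2c)\bigr)$ and $d+e+f=\tfrac12\bigl((d+2e)+(d+2f)\bigr)$, valid for arbitrary numbers. Writing $p=a+2b$, $q=d+2e$, $r=a+2c$, $s=d+2f$, these identities give
$$\frac{a+b+c}{d+e+f}=\frac{p+r}{q+s},$$
the mediant of $p/q$ and $r/s$. Here $q=d+2e>0$ and $s=d+2f>0$ by hypothesis (and hence $q+s=2(d+e+f)>0$, so the stated positivity of $d+e+f$ is automatic), while $p,r\ge 0$ because $a,b,c\ge 0$.

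Next I would prove the mediant inequality in exactly the form needed, tracking the equality case. Assume without loss of generality that $p/q\le r/s$; since $q,s>0$ this is equivalent to $ps\le qr$. Then
$$\frac{p+r}{q+s}-\frac{p}{q}=\frac{qr-ps}{q(q+s)}\ge 0 \qquad\text{and}\qquad \frac{r}{s}-\frac{p+r}{q+s}=\frac{qr-ps}{s(q+s)}\ge 0,$$
so that $p/q\le(p+r)/(q+s)\le r/s=\max\{p/q,r/s\}$, which is the claimed inequality. Moreover the second of these displayed inequalities is an equality precisely when $qr-ps=0$, i.e.\ when $p/q=r/s$; so if $(a+b+c)/(d+e+f)$ attains the maximum then the two terms of the maximum must coincide. (Symmetrically, if instead $r/s\le p/q$ the same computation applies with the roles of $p,q$ and $r,s$ exchanged.) This completes the argument.

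There is essentially no hard step here: the content is just the mediant inequality plus the halving identities. The only points that require care are verifying that the denominators $d+2e$ and $d+2f$ are genuinely positive, so that cross-multiplication and the sign analysis above are legitimate, and observing that all of the slack in the estimate is concentrated in the single quantity $qr-ps$, which is what makes the equality characterization immediate.
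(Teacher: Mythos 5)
Your proof is correct and takes essentially the same approach as the paper's: the paper observes that $(a+b+c)/(d+e+f)$ is a weighted average of $(a+2b)/(d+2e)$ and $(a+2c)/(d+2f)$ with weights $(d+2e)/(2d+2e+2f)$ and $(d+2f)/(2d+2e+2f)$, which is exactly the mediant observation you make, just phrased differently. You simply fill in the cross-multiplication details that the paper leaves to the reader.
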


\begin{proof}
$(a+b+c)/(d+e+f)$ is a weighted average of the other two fractions with weights $(d+2e)/(2d+2e+2f)$ and $(d+2f)/(2d+2e+2f)$ respectively. As a weighted average with positive weights, it is not larger than the maximum of the two averaged values, and can be equal only when the two averaged values are equal.
\end{proof}

With the assumption that $S$ is a clique, any fan has a density-minimal minor that is also a fan with equal or greater density:

\begin{lemma}
\label{lem:fan-minimality}
Let $G$ be a connected graph, let $S$ be a proper subset of $G$ that induces a clique in $G$, let every vertex in $S$ be adjacent to at least one vertex in $G\setminus S$, and let $G\setminus S$ induce a connected subgraph of $G$. Then, for every $k$, there exists a densest minor of $\Fan(G,S,k)$ that is isomorphic to $\Fan(G',S',k)$, where $G'$ is some minor of $G$ (that may depend on $k)$ and $S'$ is the image of $S$ in $G'$. In addition if $S'$ is nonempty then $G'$ can be chosen so that $\Fan(G',S',k)$ is density-minimal; if $S'$ is empty then $G'$ can be chosen to be itself density-minimal.
\end{lemma}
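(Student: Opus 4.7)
The plan is to begin with an arbitrary densest minor $H$ of $\Fan(G,S,k)$, symmetrize it across the $k$ copies so that it becomes a fan of the required form, and then pass to a minimum-size such fan to obtain density-minimality. First I would classify the branch sets of $H$ into \emph{central} sets (those that meet $S$) and \emph{internal to branch $i$} sets (those contained in a single copy $V_i$ of $V(G)\setminus S$). Because $S$ induces a clique in $G$, for each branch set $B$ and each copy index $i$ the intersection $B\cap(V_i\cup S)$ is connected inside the $i$-th copy of $G$: any path in $B$ that wanders through another $V_j$ can be short-circuited across the $S$-clique using the $S$-vertices already in $B$. This lets me record, for each $i$, a well-defined ``branch datum'' $D_i$ consisting of the partition of the kept vertices of $V_i$ into internal sets and attachments into each central cluster.

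Next I would run an averaging argument built on Lemma~\ref{lem:subst}. Let $s'$ be the number of central clusters, let $E_S$ be the number of edges of $H$ coming from the shared $S$-clique (which in a densest minor equals $\binom{s'}{2}$, since deleting such an edge only lowers density), and for each $i$ let $v_i$ and $e_i$ be the number of internal sets and the number of $H$-edges coming from the $i$-th copy of $V(G)\setminus S$ respectively. The density of $H$ is
\[
\Delta(H)=\frac{E_S+\sum_i e_i}{s'+\sum_i v_i}
=\frac{\sum_i(E_S+ke_i)}{\sum_i(s'+kv_i)},
\]
which is a convex combination of the quantities $\alpha_i=(E_S+ke_i)/(s'+kv_i)$ with positive weights proportional to $s'+kv_i$; hence $\Delta(H)\le\max_i\alpha_i$. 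Here $\alpha_i$ is the density that would arise if every branch used the datum $D_i$. Choose $i^*$ attaining this maximum and replace every $D_j$ by an isomorphic copy of $D_{i^*}$. Each central cluster stays connected under this replacement because inside the $i^*$-th copy it already reaches its $S$-part through $V_{i^*}$, and transplanting that attachment pattern into every branch keeps the cluster joined through the shared $S$-vertices. Letting $G'$ be the minor of $G$ obtained by contracting one copy of $G$ according to $D_{i^*}$ together with the partition of $S$ induced by the central clusters, and letting $S'$ be the image of $S$, the resulting minor is isomorphic to $\Fan(G',S',k)$ and has density at least $\Delta(H)$, hence equal to it.

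To upgrade to density-minimality I would choose $G'$ among all pairs $(G',S')$ arising from the procedure above so as to minimize $|V(\Fan(G',S',k))|$. If the chosen fan had a proper minor of density at least $\Delta(H)$, that minor would automatically be a densest minor of $\Fan(G,S,k)$, and applying the symmetrization procedure to it would produce another densest fan presentation with strictly fewer vertices, contradicting the choice. This handles the case $S'\neq\emptyset$. When $S'=\emptyset$ the fan degenerates into $k$ disjoint copies of $G'$, every minor is a disjoint union of minors of $G'$, and the analogous minimum-$|V(G')|$ choice forces $G'$ itself to be density-minimal.

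The main obstacle I anticipate is the bookkeeping around central clusters that have absorbed $V_i$-vertices from several different branches at once. Verifying carefully that $B\cap(V_i\cup S)$ really is connected inside a single copy of $G$, and that transplanting one branch's datum into every branch produces a minor genuinely isomorphic to a fan (rather than a graph that merely looks like one), are where the care is needed; once the decomposition is in hand, the averaging step and the minimum-vertex-count argument are essentially routine.
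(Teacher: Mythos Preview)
Your approach is essentially the paper's: both start from an arbitrary densest minor $H$, use an averaging argument to symmetrize it into a fan $\Fan(G',S',k)$ (the paper does this two types at a time via Lemma~\ref{lem:subst} and induction, you do it in one shot as a convex combination of the $\alpha_i$), and then select a minimal such $G'$ to force density-minimality (the paper uses minor-minimality, you use minimum vertex count, but either well-founded order works). Your explicit verification that each central branch set restricts to a connected piece of a single copy of $G$, via short-circuiting through the $S$-clique, is a point the paper leaves implicit.

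There is one small gap in your minimality step. When you take a hypothetical equal-density proper minor $H'$ of the chosen fan and re-apply your symmetrization rule ``pick $i^*$ attaining $\max_i\alpha_i$'', nothing prevents $i^*$ from landing on a branch of $H'$ that was left untouched, in which case the output is $\Fan(G',S',k)$ again and you get no contradiction. What you need is the \emph{equality} case of your convex-combination inequality: since $\Delta(H')$ equals the maximum of the $\alpha_i$ and the weights are strictly positive, all the $\alpha_i$ coincide, so you are free to choose $i^*$ to be a branch that was genuinely modified in passing from $\Fan(G',S',k)$ to $H'$. Then $G''$ is a proper minor of $G'$ and the resulting fan is strictly smaller, giving the contradiction. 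This is precisely how the paper uses the equality clause of Lemma~\ref{lem:subst} in its final sentence.
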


\begin{proof}
Let $H$ be a densest minor of $\Fan(G,S,k)$, and let $S'$ be the image of $S$ in $H$. Note that it is not possible for all copies of $G$ to be contracted onto $S'$ in $H$, for (with $s=|S'|$) the density $(s-1)/2$ that would arise from this possibility is less than the density
$$\frac{s(s-1)/2+ks}{s+k}=\frac{s}{s+k}\cdot\frac{s-1}{2}+\frac{k}{s+k}s$$
of the graph in which, in each copy of $G$, the vertices that are not part of $S$ are contracted into a single vertex: as the formula shows, this latter graph's density is a weighted average of $(s-1)/2$ and $s$, and therefore exceeds $(s-1)/2$.

Suppose that the copies of $G$ in the fan are transformed in $H$ into two or more different minors, and let $G_1$ and $G_2$ be two of these minors. Define the integers $b$, $c$, and $a$ respectively to be the number of edges contributed to $H$ by minors of type $G_1$, the number of edges contributed to $H$ by minors of type $G_2$, and the number of remaining edges (including edges connecting vertices in $S'$. Similarly, define $e$, $f$, and $d$ respectively to be the number of vertices contributed by minors of type $G_1$, the number of vertices contributed by minors of type $G_2$, and the remaining number of vertices (including vertices in $S'$). Then the density of $H$ is $(a+b+c)/(d+e+f)$, the density of the minor of $\Fan(G,S,k)$ formed by replacing all copies of $G_2$ by $G_1$ is $(a+2b)/(d+2e)$, and the density of the minor formed by replacing all copies of $G_1$ by $G_2$ is $(a+2c)/(d+2f)$. By Lemma~\ref{lem:subst}, one of these two replacements provides a minor of $\Fan(G,S,k)$ that is at least as dense as $H$ but that has one fewer different type of minor of $G$. By induction on the number of types of minors of $G$ appearing in $H$, there is a densest minor of $\Fan(G,S,k)$ of the form $\Fan(G',S',k)$ where $G'$ is a minor of $G$. Among all choices of $G'$ leading to densest minors of this form, choose $G'$ to be minimal in the minor ordering.

If $S'=\emptyset$, $G'$ must be density-minimal, for any minor of $G'$ with equal or smaller density would have been chosen in place of $G'.$ If $S'\ne\emptyset$, $\Fan(G',S',k)$ must be density-minimal, for (again using Lemma~\ref{lem:subst} to reduce the number of different types of minor in the graph) substituting some copies of $G'$ for smaller minors could only produce a graph of equal density if these substitute minors could all be chosen to be isomorphic copies of a single minor $G''$, but then (because of the choice of $G'$ as giving the densest minor of this form) $\Fan(G'',S'',k)$ would have equal density to $\Fan(G',S',k)$, contradicting the choice of $G'$ as a minor-minimal graph whose fan has this density.
\end{proof}

\section{Limiting density from density-minimal graphs}

As we now show, the density-minimal graphs constructed in the previous two sections provide examples of minor-closed families, with the limiting density of the minor-closed family equal to the density of the density-minimal graph.

\subsection{Minor-closed families with the density of a given density-minimal graph}

For any graph $G$, define $\CF(G)$ to be the family of minors of graphs of the form $\Fan(G,\emptyset,k)$ for some positive $k$. The graphs in $\CF(G)$ are characterized by the property that every connected component is a minor of $G$. Clearly, $\CF(G)$ is minor-closed.

\begin{lemma}
\label{lem:cf-density}
If $G$ is density-minimal, then the limiting density of $\CF(G)$ equals the density of $G$.
\end{lemma}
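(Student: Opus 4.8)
The plan is to establish the two inequalities separately: that the limiting density of $\CF(G)$ is at least the density of $G$, and that it is at most the density of $G$.

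The lower bound is immediate. Since $G$ itself belongs to $\CF(G)$ (it is a connected component of $\Fan(G,\emptyset,1)$), and more to the point $\Fan(G,\emptyset,k)$ belongs to $\CF(G)$ for every $k$, we have a sequence of graphs in the family whose density is exactly $m/n$ (the density of $G$) for arbitrarily large vertex counts $kn$. Hence the extremal function of $\CF(G)$ is at least $(m/n)\cdot(kn)$ on infinitely many values, so the limiting density is at least $m/n$.

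The upper bound is the substantive direction. Here I would argue that \emph{every} graph $H$ in $\CF(G)$ has density at most $m/n$, which trivially gives the stronger statement that the extremal function is bounded by $(m/n)\cdot(\text{number of vertices})$ and hence the limiting density is at most $m/n$. By the stated characterization, every connected component of $H$ is a minor of $G$. Since $G$ is density-minimal, no proper minor of $G$ has density $\ge m/n$, and $G$ itself has density exactly $m/n$; so each connected component of $H$ has density at most $m/n$. The density of a disjoint union is a weighted average (by vertex count) of the densities of the components, so $H$ has density at most $m/n$ as well. The main obstacle, if any, is being careful about the edge case where $G$ (or a component) is a single vertex with no edges, or more generally checking that ``minor of $G$'' really does force the component density bound even when the minor is obtained with vertex deletions and not just contractions — but density-minimality is defined precisely so that \emph{no} proper minor beats the density, which covers all these cases, so there is no real difficulty.

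Putting the two bounds together, the limiting density of $\CF(G)$ is exactly the density $m/n$ of $G$, as claimed. I would remark that the argument in fact shows something slightly stronger than an asymptotic statement: the extremal function of $\CF(G)$ satisfies $\mathrm{ex}_{\CF(G)}(N) = \lfloor (m/n) N \rfloor$-type bounds exactly for all $N$ that are multiples of $n$, since the fans meet the per-component bound with equality, so the $(1+o(1))$ in the definition of limiting density is not really needed for this particular family.
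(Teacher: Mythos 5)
Your proof is correct and follows essentially the same route as the paper: the lower bound from the fans $\Fan(G,\emptyset,k)$, and the upper bound from the observation that every component of a graph in $\CF(G)$ is a minor of $G$ and hence (by density-minimality) has density at most that of $G$, with the disjoint union inheriting the same bound as a weighted average. The extra remark about the extremal function being exactly $(m/n)N$ on multiples of $n$ is a correct and harmless addition.
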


\begin{proof}
$\CF(G)$ contains arbitrarily large graphs with this density, namely the graphs $\Fan(G,\emptyset,k)$. Therefore its limiting density is at least equal to the density of $G$. But the fact that its limiting density is no more than the density of $G$ is immediate, for if it contained any denser graph then that graph would have a dense component forming a minor of $G$ and contradicting the assumed density-minimality of~$G$.
\end{proof}

\begin{corollary}
\label{cor:cf}
The set of limiting densities of minor-closed graph families is a superset of the set of densities of density-minimal graphs.
\end{corollary}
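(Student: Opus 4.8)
The plan is to observe that this statement is essentially an immediate consequence of Lemma~\ref{lem:cf-density} together with the fact, noted just before that lemma, that $\CF(G)$ is minor-closed for every graph $G$. So the proof will be a one-line unwinding of definitions rather than anything substantive.

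Concretely, I would argue as follows. Let $x$ be any density of a density-minimal graph, say $x$ equals the density of the density-minimal graph $G$. Consider the family $\CF(G)$, which is minor-closed. By Lemma~\ref{lem:cf-density}, since $G$ is density-minimal, the limiting density of $\CF(G)$ equals the density of $G$, which is $x$. Hence $x$ is the limiting density of some minor-closed graph family, namely $\CF(G)$. Since $x$ was an arbitrary density of a density-minimal graph, every such density occurs as a limiting density, which is exactly the claimed superset relation.

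There is no real obstacle here: all the work has already been done in establishing that $\CF(G)$ is minor-closed (characterized by its connected components being minors of $G$) and in Lemma~\ref{lem:cf-density}'s two-sided bound (the fans $\Fan(G,\emptyset,k)$ give arbitrarily large graphs of density exactly that of $G$, while any denser graph in the family would have a component that is a minor of $G$ denser than $G$, contradicting density-minimality). The only thing to be careful about is simply to state that $G$ ranges over all density-minimal graphs and that each contributes its density to the set of limiting densities. I would therefore keep the proof to two or three sentences.

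\begin{proof}
Let $G$ be any density-minimal graph. Then $\CF(G)$ is a minor-closed family of graphs, and by Lemma~\ref{lem:cf-density} its limiting density is exactly the density of $G$. Thus the density of every density-minimal graph arises as the limiting density of some minor-closed family, which is what the corollary asserts.
\end{proof}
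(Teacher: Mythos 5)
Your proof is correct and is exactly the argument the paper intends: the corollary is stated without a separate proof precisely because it follows immediately from Lemma~\ref{lem:cf-density} together with the observation that $\CF(G)$ is minor-closed. Nothing is missing.
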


\subsection{Gaps in the densities of density-minimal graphs}

The idea of designing a minor-closed family to achieve a given density comes up in a different way in the following lemma, which is the basis for our proof that the limiting densities form a well-ordered set.

\begin{lemma}
\label{lem:jump}
For every $\Delta\ge 0$ there exists a number $\delta>0$ such that the open interval $(\Delta,\Delta+\delta)$ does not contain the density of any density-minimal graph.
\end{lemma}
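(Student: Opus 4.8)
The plan is to show that density-minimal graphs of density close to a given value $\Delta$ are forced to have a bounded ``core'' together with a controlled repetitive part, and then to argue that only finitely many densities can occur just above $\Delta$. The starting point is that if $G$ is density-minimal with density exceeding $\Delta$, then $G$ is bridgeless (a bridge could be contracted to raise the density) and, more importantly, every block of $G$ has bounded rank: by Lemma~\ref{lem:rank-limit-3/2} a block of rank four or more already contains a minor of density $3/2$, so for $\Delta<3/2$ all blocks have rank at most three, and for larger $\Delta$ one expects an analogous (if less explicit) bound on the rank of any single block that can appear, coming from the fact that high-rank biconnected graphs contain dense minors unless they have a very restricted ``cycle of small gadgets'' structure. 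The key intermediate claim is therefore: for each $\Delta$ there is a finite list of ``allowed'' biconnected building blocks $B_1,\dots,B_m$ such that every density-minimal graph of density in some interval $(\Delta,\Delta+\delta')$ is an edge-disjoint union of copies of these blocks glued in a tree-like (block-cut-tree) fashion, possibly with some pendant trees attached.

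Given such a decomposition, I would analyze how the density depends on the multiset of blocks used. If a graph is assembled from $n_i$ copies of block $B_i$ (with $b_i$ edges and $a_i$ vertices, where gluing at a single vertex means the vertex counts overlap by one per gluing) plus a tree-part, the overall density is a ratio of two linear forms in the $n_i$. As any one $n_i\to\infty$ with the others fixed, this ratio tends to the density of the block $B_i$ that is being repeated (the ``asymptotic density'' of that block, in the spirit of Lemma~\ref{lem:subst} and Lemma~\ref{lem:fan-minimality}); and density-minimality forbids a block of strictly smaller asymptotic density from being combined with anything denser, since one could delete a copy of the light block and raise the density. This is exactly the kind of averaging argument already used in the proof of Lemma~\ref{lem:low-density}: a density-minimal graph cannot mix ``light'' and ``heavy'' pieces, so up to the repetition count of a single dominant block, only boundedly many configurations remain. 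Concretely, one shows that a density-minimal $G$ with density in a small enough interval above $\Delta$ has the form $\Fan(B,v,k)$-like structure for a single block $B$ plus a bounded remainder, and the density is then $\frac{p+qk}{r+sk}$ for bounded integers $p,q,r,s$; such a sequence is monotone in $k$ and converges, so it takes only finitely many values in any interval not containing its limit.

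Assembling these pieces: for a given $\Delta$, there are finitely many allowed blocks, hence finitely many possible ``limit densities'' of the form (asymptotic density of a block) that a family of density-minimal graphs can approach; each of these limit points is either $\le\Delta$ or $>\Delta$, so there is a gap below the smallest one that is $>\Delta$. Away from these finitely many limit points, the density is a bounded-complexity rational expression taking only finitely many values in any bounded window, so those contribute only finitely many densities in $(\Delta,\Delta+\delta'')$ for small $\delta''$. Taking $\delta$ small enough to avoid all of this finite set of ``bad'' densities and to sit below the least block-limit-point exceeding $\Delta$ gives the claimed empty interval $(\Delta,\Delta+\delta)$.

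\medskip

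\noindent\textbf{Main obstacle.} The delicate step is the finiteness of the allowed building blocks for density $> 3/2$: Lemma~\ref{lem:rank-limit-3/2} only bounds the rank of a block below the $3/2$ threshold, and above it there are biconnected graphs (cycles of triangles, and more elaborate ``cycle of gadgets'' graphs) of unbounded rank whose minors all stay at density $3/2$ or other fixed values. So the real work is to show that for each target $\Delta$, a biconnected graph of sufficiently high rank either contains a minor of density $\ge\Delta+$const or else has a periodic cyclic structure that makes it (asymptotically) a fan of a bounded gadget --- in which case it is handled by the fan machinery of Section~5 rather than counted as a genuinely new block. I expect this to require the separator theorem of~\cite{AloSeyTho-JAMS-90} or a direct ear-decomposition argument, pushing a long ear decomposition until either two ``independent'' dense configurations appear (giving a denser minor by contracting everything between them) or the decomposition visibly repeats. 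Once that structural dichotomy is in hand, the averaging and rational-function arguments above are routine.
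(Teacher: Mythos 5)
Your proposal attempts a direct structural classification of density-minimal graphs near a given density $\Delta$ --- bounded blocks, block--cut-tree gluings, rational-function densities in block multiplicities --- and you are right that this program hits a wall: Lemma~\ref{lem:rank-limit-3/2} bounds block rank only below the $3/2$ threshold, and above it the cycles-of-triangles examples show that biconnected blocks of unbounded rank genuinely occur. You flag this yourself as the ``main obstacle,'' and it is not a cosmetic gap: the finiteness of allowed building blocks, and the claimed dichotomy between ``contains a denser minor'' and ``is asymptotically a fan of a bounded gadget,'' are exactly the hard content, and nothing in your sketch establishes them. As written, the proof does not close.

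The paper's proof avoids every bit of this structural analysis. Define $\Sparse(\Delta)$ to be the family of all graphs having no minor of density greater than $\Delta$; this family is minor-closed by construction. By the Robertson--Seymour graph minor theorem it has a \emph{finite} set of minimal forbidden minors, each of which (being outside $\Sparse(\Delta)$) has density strictly greater than $\Delta$. Let $\Delta+\delta$ be the least of these finitely many densities. If $G$ is density-minimal with density exceeding $\Delta$, then $G\notin\Sparse(\Delta)$, so $G$ contains one of the forbidden minors, which has density at least $\Delta+\delta$; by density-minimality the density of $G$ is at least as large as that of any of its minors, hence at least $\Delta+\delta$. That is the whole proof. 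The lesson is that the ``finiteness'' you were trying to manufacture by hand --- finitely many block types, finitely many limit densities --- is supplied for free, at any density level, by Robertson--Seymour applied to $\Sparse(\Delta)$, and the density-minimality hypothesis is exactly what lets you transfer the density lower bound from the forbidden minor up to $G$ itself. Your averaging and Lemma~\ref{lem:subst}-style arguments are good instincts (the paper does use them, but in Lemma~\ref{lem:fan-minimality} and Lemma~\ref{lem:density}, not here), and your structural picture is plausibly true, but it is far more than what Lemma~\ref{lem:jump} needs and it is not proved.
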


\begin{proof}
Let $\Sparse(\Delta)$ be the minor-closed family of graphs that do not contain any minor that is more dense than $\Delta$. By the Robertson--Seymour theorem, $\Sparse(\Delta)$ can be characterized by a finite set of minimal forbidden minors; a minimal forbidden minor for $\Sparse(\Delta)$ is a graph whose density is greater than $\Delta$ but all of whose proper minors have density less than or equal to $\Delta$. Thus, any minimal forbidden minor of $\Sparse(\Delta)$ must have density greater than $\Delta$; let $\Delta+\delta$ be the smallest density among these finitely many forbidden minors. Now suppose that $G$ is a density-minimal graph whose density is greater than $\Delta$. $G$ is not in $\Sparse(\Delta)$, so it has as a minor one of the forbidden minors of $\Sparse(\Delta)$, all of which have density at least $\Delta+\delta$. Because $G$ is density-minimal, its density is at least as great as that of its minor, and therefore is also at least $\Delta+\delta$. Therefore, $G$ cannot have density within the interval $(\Delta,\Delta+\delta)$.
\end{proof}

\section{Separators in minor-closed graph families}

We will use separators as a tool to find dense fans in any minor-closed family, so in this section we formalize the mathematics of separators in the form that we need.

If $G$ is any graph, define a \emph{separation} of $G$ to be a collection of subgraphs that partition the
edges of $G$. We call these subgraphs the \emph{separation components} of a separation. Define the \emph{separator} of a separation to be the set of vertices that belong to more than one separation component. The separator theorem of Alon, Seymour, and Thomas~\cite{AloSeyTho-JAMS-90} can be rephrased in terms of separations:

\begin{lemma}[Alon, Seymour, and Thomas~\cite{AloSeyTho-JAMS-90}]
\label{lem:sqrt-separator}
For any minor-closed graph family $\Family$ there exists a constant $\sigmaf$ such that
every $n$-vertex graph $G$ in $\Family$  has a separation of $G$ into two subgraphs $G_1$ and $G_2$, each having at least $n/3$ vertices, with at most $\sigmaf\sqrt n$ vertices in the separator.
\end{lemma}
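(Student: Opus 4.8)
The plan is to obtain this separation-based statement directly from the standard vertex-separator formulation of the Alon--Seymour--Thomas theorem~\cite{AloSeyTho-JAMS-90}: for each minor-closed family $\Family$ there is a constant $\sigmaf$ such that every $n$-vertex graph $G\in\Family$ has a set $X$ of at most $\sigmaf\sqrt n$ vertices for which every connected component of $G\setminus X$ has at most $2n/3$ vertices. Given such a set $X$, I would build the required separation $G_1,G_2$ with separator contained in $X$.

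First I would bundle the connected components of $G\setminus X$ into two groups $A$ and $B$, as balanced as the component sizes permit, so that the vertex sets of $A$ and $B$ each contain at least $n/3-|X|$ of the $n-|X|$ vertices of $G\setminus X$. If some single component already has at least $n/3-|X|$ vertices, I put it alone into $A$; since it has at most $2n/3$ vertices, $B$ retains at least $(n-|X|)-2n/3=n/3-|X|$ vertices. Otherwise every component has fewer than $n/3-|X|$ vertices, and I add components greedily to $A$ until its vertex count first reaches $n/3-|X|$ (this threshold is always reached, since the total $n-|X|$ exceeds it); the group $A$ then has fewer than $2(n/3-|X|)\le 2n/3$ vertices, so $B$ again retains at least $n/3-|X|$ vertices.

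Next I would convert this vertex partition into a separation. Because $A$ and $B$ are unions of distinct components of $G\setminus X$, no edge of $G$ joins a vertex of $A$ to a vertex of $B$. Let $G_1$ be the subgraph whose vertices are the vertices of $X$ together with those in the components of $A$ and whose edges are all edges of $G$ incident to a vertex of $A$ together with all edges of $G$ with both endpoints in $X$; let $G_2$ be the subgraph whose vertices are the vertices of $X$ together with those in the components of $B$ and whose edges are all the remaining edges of $G$. Every edge of $G$ lies in exactly one of $G_1$ and $G_2$, so this is a separation; the vertices belonging to both subgraphs are exactly the vertices of $X$, so the separator has at most $\sigmaf\sqrt n$ vertices; and each of $G_1$ and $G_2$ has at least $(n/3-|X|)+|X|=n/3$ vertices.

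The one point requiring care is the balancing step: the components of $G\setminus X$ can be very lopsided---one component may hold close to $2n/3$ of the vertices---so the bundling has to be arranged, via the case split above, to keep at least $n/3-|X|$ vertices on each side. This is exactly what is needed so that reattaching the separator $X$ to both sides restores the $n/3$ lower bound demanded by the statement; the rest is routine bookkeeping about which vertices and edges land on each side.
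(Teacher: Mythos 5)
The paper states this lemma as a direct ``rephrasing'' of the Alon--Seymour--Thomas theorem and gives no proof of its own; your proposal correctly supplies the routine conversion the paper elides, starting from the standard vertex-deletion form of that theorem, balancing components of $G\setminus X$ into two groups via the case split on whether a large component exists, and reattaching $X$ to both sides. The argument is correct (including the boundary cases: if $|X|\ge n/3$ both sides trivially have at least $n/3$ vertices because $X$ appears on both), so this matches the paper's intended approach.
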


For our purposes we need a form of separator theorem that produces much smaller separation components, with only constant size. This can be achieved by repeatedly applying the separator theorem of Lemma~\ref{lem:sqrt-separator} to larger components until they are small enough. A finer separation theorem of this type for planar graphs, proved in the same way from the planar separator theorem, has long been known~\cite[Theorem 3]{LipTar-SJC-80} and similar methods have been used as well in the context of minor-closed graphs (e.g. see~\cite[Lemma 3.4]{TazMul-DAM-09}). However, in order to control the density of the components, we need to be careful about how we measure the size of the separator, since a single separator vertex may appear in many components. We define the \emph{separator multiplicity} of a vertex $v$ in a separation of $G$ to be one less than the number of separation components containing $v$, so that a vertex has nonzero separator multiplicity if and only if it belongs to the separator, and we define the \emph{total multiplicity} of a separation to be the sum of the separator multiplicities of the vertices. If a vertex has separator multiplicity zero we say that it is an \emph{internal vertex} of its separation component.

\begin{lemma}
\label{lem:pulverize}
For any minor closed graph family $\Family$ there exists a constant $\rhof$ such that, for every $0<\epsilon<1$ and every $n$-vertex graph $G$, there is a separation of $G$ into subgraphs, each having at most $\rhof/\epsilon^2$ vertices, with total multiplicity at most $\epsilon n$.
\end{lemma}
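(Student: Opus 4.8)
The plan is to apply Lemma~\ref{lem:sqrt-separator} recursively, building a binary tree of separations. We start with $G$ itself at the root. Whenever a current separation component has more than $\rhof/\epsilon^2$ vertices — where we will fix $\rhof$ in terms of $\sigmaf$ at the end — we split it using Lemma~\ref{lem:sqrt-separator}, replacing it in our separation by the two pieces $G_1$ and $G_2$, and we keep recursing on each piece that is still too large. Since each split reduces the vertex count of a component by at least a constant factor (each child has at most $2/3$ of the parent's vertices, up to the additive separator term, which is lower-order once components are moderately large), the recursion terminates, and every final component has at most $\rhof/\epsilon^2$ vertices. We must check that Lemma~\ref{lem:sqrt-separator} still applies at each step: the components we recurse on are subgraphs of $G$, hence still in $\Family$ (as $\Family$ is minor-closed and a subgraph is a minor), so the same constant $\sigmaf$ governs all splits.

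The crux is bounding the total multiplicity. Each application of Lemma~\ref{lem:sqrt-separator} to a component with $m$ vertices creates a separator of size at most $\sigmaf\sqrt m$; each vertex placed in that separator appears in one extra component, contributing $1$ to the total multiplicity for that split (a vertex split off repeatedly can accrue more, but we account for that by charging each split separately). So the total multiplicity is at most $\sigmaf$ times the sum of $\sqrt{m}$ over all components $C$ of size $m$ that ever get split in the recursion tree. To bound $\sum_C \sqrt{|C|}$, I would group the components by their level in the recursion tree. At each level the components have disjoint vertex sets except for shared separator vertices, but since separators are lower-order we get that the components at a single level have vertex counts summing to $O(n)$, and there are only $O(\log(\epsilon^2 n))$ levels before every component shrinks below the $\rhof/\epsilon^2$ threshold. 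On any single level, by concavity of $\sqrt{\cdot}$, $\sum \sqrt{|C|}$ is maximized when the components are as large as possible; since each is at least $\rhof/\epsilon^2$ in size (otherwise it would not have been split), there are at most $O(n/(\rhof/\epsilon^2)) = O(n\epsilon^2/\rhof)$ of them per level, and $\sum\sqrt{|C|} \le \sqrt{\sum |C|}\cdot\sqrt{(\text{number of components})} = O(\sqrt{n}\cdot\sqrt{n\epsilon^2/\rhof}) = O(n\epsilon/\sqrt{\rhof})$. Summing over the $O(\log)$ levels, the total multiplicity is $O\bigl(\sigmaf\, n\epsilon \log(\epsilon n)/\sqrt{\rhof}\bigr)$.

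This last bound has an unwanted logarithmic factor, so the main obstacle is to remove it. The standard fix (as in~\cite[Theorem 3]{LipTar-SJC-80}) is to be more careful: instead of recursing on a component until it is absolutely small, one can argue that the number of separator vertices introduced is geometrically decreasing down the levels, because the \emph{total} size of components at level $\ell$ already drops — once a component has fewer than $\rhof/\epsilon^2$ vertices it stops, so the surviving large components at successive levels have total size shrinking like a geometric series after the first few levels. More precisely, I would track $n_\ell = \sum\{|C| : C \text{ a level-}\ell \text{ component with } |C| > \rhof/\epsilon^2\}$; since splitting an $m$-vertex component yields two children each of size at most $\tfrac23 m + \sigmaf\sqrt m \le \tfrac34 m$ once $m \ge \rhof/\epsilon^2$ is large enough, and since children below the threshold are discarded from the count, one gets $n_{\ell+1} \le \tfrac34 n_\ell$ after bounding the separator contribution. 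Then the separator vertices introduced at level $\ell$ number at most $\sigmaf \sqrt{n_\ell \cdot (\text{count at level }\ell)} \le \sigmaf \sqrt{n_\ell}\sqrt{n_\ell \epsilon^2/\rhof} = \sigmaf n_\ell \epsilon/\sqrt{\rhof}$, and $\sum_\ell n_\ell \le \frac{1}{1-3/4} n = 4n$ by the geometric decay, giving total multiplicity at most $4\sigmaf\sqrt\rhof{}^{-1} \cdot \epsilon n$. Choosing $\rhof$ a large enough constant multiple of $\sigmaf^2$ makes this at most $\epsilon n$, completing the proof. The one place requiring genuine care is justifying $n_{\ell+1}\le\frac34 n_\ell$ simultaneously with the per-level separator count: I would handle this by choosing $\rhof$ large enough that on every component of size $\ge\rhof/\epsilon^2$ the additive $\sigmaf\sqrt m$ term is at most, say, $\tfrac{1}{12}m$, which is where the quadratic dependence $\rhof/\epsilon^2$ (rather than linear) in the component-size bound is essential.
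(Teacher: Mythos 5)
There is a genuine gap in the step where you try to eliminate the logarithmic factor. The claimed geometric decay $n_{\ell+1}\le\tfrac34 n_\ell$ is false. Consider a component of size $m$ far above the threshold $T=\rhof/\epsilon^2$, split by Lemma~\ref{lem:sqrt-separator} into two children each of size at least $m/3$. When $m\ge 10T$, say, \emph{both} children exceed $T$ and both remain in the count at the next level. Their total size is $m$ plus the separator size, i.e.\ at least $m$, so the contribution of this parent to $n_{\ell+1}$ is at least its contribution to $n_\ell$. No decay occurs until components have shrunk near the threshold, which takes $\Theta(\log(\epsilon^2 n))$ levels. The individual bound ``each child has size $\le\tfrac34 m$'' does not imply anything about the sum of the two children's sizes, and that is what governs $n_{\ell+1}$. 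Consequently $\sum_\ell n_\ell$ is $\Theta(n\log(\epsilon^2 n))$, not $O(n)$, so the log factor you identified at the outset has not actually been removed, and it cannot be absorbed into the constant $\rhof$ since $\rhof$ must be independent of $n$.

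The paper removes the logarithm by a different device: instead of a level-by-level accounting, it proves by induction on $n$ the \emph{strengthened} statement that for $n\ge(X/\epsilon)^2$ one can achieve total multiplicity at most $\epsilon n - X\sqrt{n}$, where $X$ is chosen so that $X\sqrt{2/3}+X\sqrt{1/3}-2\sigmaf=X$. The negative correction term $-X\sqrt{n}$ plays the role of a credit: when a component of size $n$ is split into pieces of sizes $n_1,n_2$ with $n_1,n_2\ge n/3$, the concavity of $\sqrt{\cdot}$ guarantees $\sqrt{n_1}+\sqrt{n_2}\ge(\sqrt{2/3}+\sqrt{1/3})\sqrt{n}$, so the credits on the two children more than pay for the $\sigmaf\sqrt{n}$ new separator vertices while still leaving a credit of $X\sqrt{n}$ at the parent. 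This is a per-split amortized charging scheme that never needs to reason about levels or about aggregate sizes $n_\ell$, and it is what actually closes the gap that your geometric-series argument was meant to close. Your high-level plan (repeated application of Lemma~\ref{lem:sqrt-separator} until all pieces are small, then bound the accumulated separator size) is correct and is the same as the paper's, but the final accounting needs the strengthened inductive invariant, not the geometric decay.
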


\begin{proof}
Set
$$X=\frac{2\sigmaf}{\sqrt{2/3}+\sqrt{1/3}-1} \mbox{~and~} \rhof=3X^2.$$

We prove by induction on $n$ a stronger version of the lemma: whenever $n\ge (X/\epsilon)^2$, there exists a separation in which the size of each separation component is as specified and for which the total multiplicity is at most $\epsilon\, n - X\sqrt n.$
For $n<(X/\epsilon)^2$ the induction does not go through because $\epsilon\, n - X\sqrt n$ is negative, but in this case the lemma itself follows trivially since we may choose a separation with a single separation component (all of $G$) and no vertices in the separator; the total multiplicity is $0$, which is is at most $\epsilon n$ as desired.
As a base case for the induction, whenever $(X/\epsilon)^2\le n\le 3(X/\epsilon)^2=\rhof/\epsilon^2$, we again choose a separation with a single separation component and no vertices in the separator; for $n$ at least this large, $\epsilon\, n - X\sqrt n\ge 0$ which again exceeds the zero value of the total multiplicity.

For larger values of $n$, apply Lemma~\ref{lem:sqrt-separator} to find two separation components $G_1$ and $G_2$ of $G$, with separator $S$. Each separation component will have at least $n/3\ge (X/\epsilon)^2$ vertices, so we may apply the induction hypothesis to them, separating them into components with at most $\rhof/\epsilon^2$ vertices per component. Our separation of $G$ is the union of the sets of components in these two separations.

The separator for this separation consists of the union of the separators in the two subgraphs $G_1$ and $G_2$, together with the vertices in $S$. Let $n_1$ and $n_2$ be the numbers of vertices in $G_1$ and $G_2$, respectively. When we combine the two separations, the separator multiplicity of vertices outside $S$ remains unchanged, and the multiplicity of vertices in $S$ increases by exactly one.
Therefore, by the induction hypothesis, the combined total multiplicity is at most
$$(\epsilon n_1 - X\sqrt{n_1}) + (\epsilon n_2 - X\sqrt{n_2}) + \sigmaf\sqrt{n}.$$
The worst case, for the terms $X\sqrt{n_1}$ and $X\sqrt{n_2}$ appearing in this expression, is that $n_1$ is $n/3$ and $n_2$ is only a small amount larger than $2n/3$; for, in that case, the sum of these two terms is as small as possible. If $n_1$ and $n_2$ are closer together, the sum of these terms is larger and a larger amount is subtracted from the total expression. Substituting $n_1$ and $n_2$ for this worst case and using the assumption that $\epsilon\le 1$ to bound $\epsilon(n_1+n_2)\le \epsilon n + \sigmaf\sqrt n$ allows this upper bound on the total multiplicity of the separator to be regrouped as
$$\epsilon n - (X\sqrt{2/3} + X\sqrt{1/3} - 2\sigmaf)\sqrt{n}.$$
Due to our choice of $X$, this further simplifies to $\epsilon n - X\sqrt n$ as required by the induction.
\end{proof}

\section{Density-minimal graphs from limiting density}

As we now show, the limiting density of a minor-closed graph family $\Family$ may be approximated by the densities of a subfamily of the density-minimal graphs that it contains. Some care is needed, though, as it is possible for a minor-closed graph family to contain some density-minimal graphs whose densities are strictly larger than the limiting density of the family. The main idea of the sequence of lemmas in this section is to show that $\Family$ contains large dense graphs with a structure that is progressively more uniform, eventually leading to the result that it contains large dense density-minimal fans.

\begin{lemma}
\label{lem:dense-separation}
Let $\Family$ be a minor-closed family with limiting density $\Delta$. Then for any $\epsilon>0$ and any integer $k>0$, $\Family$ contains a graph $G$, such that $G$ has a separation into $k$ or more separation components, each of which has size $O(\epsilon^{-2})$ (where the constant hidden in the $O$-notation depends only on $\Family$), such that the disjoint union of the separation components forms a graph with density at least $\Delta-\epsilon$.
\end{lemma}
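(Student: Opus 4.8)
The plan is to combine the definition of limiting density with the pulverization lemma (Lemma~\ref{lem:pulverize}), using the fact that removing a bounded number of vertices (measured with multiplicity) from a graph changes its edge-to-vertex ratio by only a small amount. First I would use the definition of limiting density: since $\Delta$ is the limiting density of $\Family$, for every $\eta>0$ there are arbitrarily large graphs in $\Family$ with density at least $\Delta-\eta$. Fix such a graph $G$ with $n$ vertices, where $n$ is chosen large enough (in terms of $\epsilon$, $k$, and the constants $\sigmaf$, $\rhof$ of $\Family$) that all the inequalities below go through; in particular $n$ should be large enough that $n/(\rhof/\epsilon'^2)\ge k$ for a suitable $\epsilon'$, so that the resulting separation has at least $k$ components.

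Next I would apply Lemma~\ref{lem:pulverize} to $G$ with a parameter $\epsilon'$ to be chosen (of order $\epsilon$), obtaining a separation into separation components each with at most $\rhof/\epsilon'^2=O(\epsilon^{-2})$ vertices and with total multiplicity at most $\epsilon' n$. Now I would estimate the density of the disjoint union $U$ of the separation components. The edges of $U$ are exactly the edges of $G$ (each edge lies in exactly one component, since the components partition $E(G)$), so $U$ has exactly $|E(G)|\ge (\Delta-\eta)n$ edges. The number of vertices of $U$ is $n$ plus the total multiplicity, hence at most $n+\epsilon' n=(1+\epsilon')n$. Therefore the density of $U$ is at least
$$\frac{(\Delta-\eta)n}{(1+\epsilon')n}=\frac{\Delta-\eta}{1+\epsilon'},$$
and choosing $\eta$ and $\epsilon'$ small enough (say $\eta,\epsilon'$ both a small constant multiple of $\epsilon/(1+\Delta)$) makes this at least $\Delta-\epsilon$, as required. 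Finally I would check the component-count condition: since each component has at most $\rhof/\epsilon'^2$ vertices and together they have at least $n$ vertices, there are at least $n\epsilon'^2/\rhof$ components, which exceeds $k$ once $n$ is chosen large enough; since arbitrarily large $G$ are available, such an $n$ exists.

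The main obstacle, such as it is, is bookkeeping with the two competing small parameters: the slack $\eta$ in the density coming from the definition of limiting density, and the parameter $\epsilon'$ fed into Lemma~\ref{lem:pulverize}, which simultaneously controls the component size ($O(\epsilon'^{-2})$) and the vertex blow-up ($1+\epsilon'$). One must pick $\epsilon'=\Theta(\epsilon)$ so that the component-size bound $O(\epsilon^{-2})$ is honestly of the claimed order while the density loss $\frac{\Delta-\eta}{1+\epsilon'}\ge\Delta-\epsilon$ still holds; this is just a matter of writing $\Delta-\epsilon\le\frac{\Delta-\eta}{1+\epsilon'}$ and solving for admissible $\eta,\epsilon'$. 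There is also the minor subtlety that Lemma~\ref{lem:pulverize} applies to any $n$-vertex graph, so no membership issue arises there; we only need $G\in\Family$ to invoke the definition of $\Delta$ and to know the constants $\sigmaf,\rhof$ exist. No genuinely hard step is involved — the content is entirely in Lemma~\ref{lem:pulverize}, which has already been proved.
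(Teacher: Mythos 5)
Your proof is correct and takes essentially the same route as the paper's: apply Lemma~\ref{lem:pulverize} with a parameter $\Theta(\epsilon)$, observe that the disjoint union has the same edge set as $G$ but at most $(1+\epsilon')n$ vertices, and choose the slack parameters so that $(\Delta-\eta)/(1+\epsilon')\ge\Delta-\epsilon$. The paper does exactly this, fixing $\eta=\epsilon/2$ and $\delta=\epsilon/(2\Delta)$ at the outset rather than leaving them as free parameters; your version is slightly more explicit about the bookkeeping and the component-count lower bound, but the content is identical.
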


\begin{proof}
Let $\delta=\epsilon/(2\Delta)$.
Because $\Family$ has limiting density $\Delta$, it contains arbitrarily large graphs $G$ of density at least $\Delta-\epsilon/2$. By Lemma~\ref{lem:pulverize}, any such graph $G$ with $n$ vertices has a separation into separation components of size $O(\delta^{-2})=O(\epsilon^{-2})$, in which the total multiplicity of the separation is at most $\delta n$. For sufficiently large $n$, the number of components is at least $k$.
The density of the disjoint union of the separation components can be calculated by replacing the denominator, $n$, in the definition of the density of $G$ by the new denominator $n+\delta n$; therefore, the density of the disjoint union is at most $(\Delta-\epsilon/2)/(1+\delta)\ge\Delta-\epsilon$.
\end{proof}

A simple argument related to Chebyshev's inequality lets us replace the density of the disjoint union of the separation components (a weighted average of their individual densities) by a lower bound on the density of each separation component.

\begin{lemma}
\label{lem:component-density}
Let $\Family$ be a minor-closed family with limiting density $\Delta$. Then for any $\epsilon>0$ and any integer $k>0$, $\Family$ contains a graph $G$, such that $G$ has a separation into $k$ separation components of size $O(\epsilon^{-2})$ in which each component has density at least $\Delta-\epsilon$. As in Lemma~\ref{lem:dense-separation}, the hidden constant in the $O$-notation depends only on $\Family$.
\end{lemma}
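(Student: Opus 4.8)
The plan is to deduce this from Lemma~\ref{lem:dense-separation} by a simple averaging (Markov/Chebyshev-type) argument. The point is that the density of the disjoint union of the separation components is a \emph{vertex-weighted average} of the individual component densities. Since each component has only $O(\epsilon^{-2})$ vertices, its density is bounded above by a constant depending only on $\Family$ and $\epsilon$; so if the weighted average is close to $\Delta$, then almost all of the vertices must lie in components that are individually almost as dense as $\Delta$. Restricting $G$ to the subgraph formed by $k$ of those dense components keeps us inside $\Family$, because minor-closed families are closed under taking subgraphs, and yields the statement.

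Concretely, I would first apply Lemma~\ref{lem:dense-separation} with $\epsilon$ replaced by $\epsilon/2$ and with a number of components $k'$ to be fixed later, obtaining $G\in\Family$ with a separation into components $H_1,\dots,H_m$ ($m\ge k'$), each of size at most some $C=O(\epsilon^{-2})$ that depends only on $\Family$ and $\epsilon$ (and which we may freely take as large as convenient, in particular with $(C-1)/2>\Delta$), such that the disjoint union of the $H_i$ has density at least $\Delta-\epsilon/2$. Writing $n_i,e_i$ for the vertex and edge counts of $H_i$ and $N=\sum_i n_i$, $E=\sum_i e_i$, this says $E\ge(\Delta-\epsilon/2)N$. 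Call $H_i$ \emph{good} if $e_i/n_i\ge\Delta-\epsilon$ and \emph{bad} otherwise, and let $N_{\mathrm{good}}$, $N_{\mathrm{bad}}=N-N_{\mathrm{good}}$ be the total vertex counts of the good and bad components. Every component on at most $C$ vertices satisfies $e_i\le\binom{n_i}{2}\le\tfrac{C-1}{2}n_i$, while a bad component satisfies $e_i<(\Delta-\epsilon)n_i$; summing gives $E\le\tfrac{C-1}{2}N_{\mathrm{good}}+(\Delta-\epsilon)N_{\mathrm{bad}}$. Combining with $E\ge(\Delta-\epsilon/2)N$ and solving yields $N_{\mathrm{good}}\ge\frac{(\epsilon/2)N}{(C-1)/2-\Delta+\epsilon}$. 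Since each good component has between $1$ and $C$ vertices, the number of good components is at least $N_{\mathrm{good}}/C\ge\frac{(\epsilon/2)N}{C((C-1)/2-\Delta+\epsilon)}\ge\frac{(\epsilon/2)k'}{C((C-1)/2-\Delta+\epsilon)}$, using $N\ge m\ge k'$ (the components being nonempty). As $C$ does not depend on $k'$, I can now choose $k'$ large enough that this lower bound is at least $k$. Finally, let $G'$ be the subgraph of $G$ given by the union of $k$ of the good components (including the separator vertices they share); then $G'\in\Family$, and the given separation of $G$ restricted to $G'$ is a separation of $G'$ into exactly $k$ components, each of size $O(\epsilon^{-2})$ and density at least $\Delta-\epsilon$, as required.

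I do not expect a genuine obstacle here: the argument is an elementary weighted-average estimate. The only thing to handle carefully is the order in which the parameters are fixed — $\epsilon$ must be chosen first, which pins down the component-size bound $C=O(\epsilon^{-2})$, and only then may $k'$ be taken large, so that increasing $k'$ does not feed back into $C$. One should also note that the crude density bound $(C-1)/2$ for a component on at most $C$ vertices is all that is needed; it merely inflates the value of $k'$ required, which is harmless.
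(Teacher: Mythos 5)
Your proof is correct and takes essentially the same approach as the paper's: a Chebyshev/averaging argument showing that most of the vertex weight in the disjoint union must lie in components of density at least $\Delta-\epsilon$, so that a large enough $k'$ yields $k$ such components, after which one restricts to a subgraph. The only superficial difference is that you use the crude bound $(C-1)/2$ on component density where the paper uses $\Delta^*$, the maximum density of a graph of size $O(\epsilon^{-2})$ in $\Family$; both suffice.
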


\begin{proof}
By Lemma~\ref{lem:dense-separation}, for any $k'$ there is graph $G$ in $\Family$ with a separation into $k'$ separation components of size $O(\epsilon^{-2})$, such that the density of the disjoint union of the components is at least $\Delta-\epsilon/2$. But the density of the disjoint union is a weighted average of the densities of its separation components, weighted by the number of vertices in each component.  The weights of any two components differ by at most a factor of $O(\epsilon^{-2})$. Therefore, if $\Delta^*$ denotes the largest density of any graph of size $O(\epsilon^{-2})$ in $\Family$ (necessarily bounded since there are only finitely many graphs of that size) then, in order to achieve a weighted average of $\Delta-\epsilon/2$, every $O(2\epsilon^{-3}(\Delta^*-\Delta+\epsilon/2))$ separation components with density less than $\Delta-\epsilon$ (at least $\epsilon/2$ units below the average) must be balanced by at least one component with density greater than that threshold (and at most $\Delta^*-\Delta+\epsilon/2$ units above the average). By setting $k'$ sufficiently large we may find a a graph $G'$ in which at least $k$ of the separation components have density at least $\Delta-\epsilon$, and by deleting the lower-density components of $G'$ we can find a graph with $k$ dense separation components. As $G'$ is a subgraph of $G$, it must belong to~$\Family$.
\end{proof}

We define two separation components to be \emph{isomorphic} if they are isomorphic as labeled graphs, with a labeling of each vertex according to whether it is an internal vertex of the component or a separator vertex.

\begin{lemma}
\label{lem:isomorphic}
Let $\Family$ be a minor-closed family with limiting density $\Delta$. Then for any $\epsilon>0$ and any integer $k>0$, $\Family$ contains a graph $G$, such that $G$ has a separation into $k$ isomorphic separation components of size $O(\epsilon^{-2})$ in which each component's density is at least $\Delta-\epsilon$. The hidden constant in the $O$-notation depends only on $\Family$.
\end{lemma}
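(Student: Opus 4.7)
The plan is to apply Lemma~\ref{lem:component-density} with a larger parameter in place of $k$ and then extract isomorphic components via the pigeonhole principle.

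First I observe that each separation component produced by Lemma~\ref{lem:component-density} has at most $\rhof/\epsilon^2$ vertices, so the number $N$ of possible labeled isomorphism classes on at most this many vertices (with each vertex labeled internal or separator) is finite and bounded in terms of $\Family$ and $\epsilon$ alone. Invoking Lemma~\ref{lem:component-density} with parameter $kN$ in place of $k$ yields a graph $G\in\Family$ with a separation into $kN$ components, each of size $O(\epsilon^{-2})$ and density at least $\Delta-\epsilon$. By pigeonhole, at least $k$ of these components share the same labeled isomorphism type; let $\mathcal{C}$ be such a subfamily of $k$ components.

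Next, I would let $G'$ be the subgraph of $G$ consisting of the union of the vertices and edges of the components in $\mathcal{C}$. Because $G'\subseteq G$ and $\Family$ is minor-closed, $G'\in\Family$. The components in $\mathcal{C}$ partition the edges of $G'$ and therefore form a separation of $G'$ into $k$ components, each of which retains its vertex set, size $O(\epsilon^{-2})$, and density at least $\Delta-\epsilon$.

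The main obstacle is that a vertex which is a separator vertex of $G$'s separation (because it is shared with a discarded component) can become internal in $G'$'s separation, so the chosen components, while isomorphic as labeled subgraphs of $G$, need not a priori be isomorphic as labeled subgraphs of $G'$. To close this gap I would refine the pigeonhole step: classify each component not only by its labeled isomorphism type, but also by the combinatorial type of its ``sharing pattern''---the set partition on its separator vertices recording which are shared with at least one other component, grouped by matching partner components. This finer classification still has only finitely many classes (bounded in terms of $\Family$ and $\epsilon$), so enlarging $N$ by this further factor ensures that the chosen $k$ components remain pairwise isomorphic with respect to the labeling induced by $G'$'s own separation, completing the proof.
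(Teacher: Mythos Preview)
Your first two paragraphs reproduce the paper's proof exactly: apply Lemma~\ref{lem:component-density} with $kN$ in place of $k$ (where $N$ bounds the number of labeled isomorphism types on at most $\rhof/\epsilon^{2}$ vertices), extract $k$ components of a common type by pigeonhole, and pass to their union.

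In your third paragraph you go beyond the paper and flag a genuine subtlety: once the other components are discarded, a separator vertex of a retained component can become internal, and this can happen asymmetrically across the $k$ components, so they need not remain isomorphic under the labeling induced by the \emph{new} separation. (Concretely: take two path components $a\text{--}c_1\text{--}c_2$ and $d\text{--}f_1\text{--}f_2$, each labeled internal--separator--separator, with $c_2=f_1$ shared between them and $c_1,f_2$ shared only with discarded components; after restriction the first becomes internal--internal--separator and the second internal--separator--internal.) The paper's own proof does not address this point at all; it simply forms the union and asserts the conclusion.

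Your proposed repair, however, does not close the gap. A ``sharing pattern'' that records which other components a separator vertex meets is not a per-component invariant with boundedly many values: the number of possible patterns depends on the total number of components present, not just on $\Family$ and $\epsilon$, so you cannot absorb it into $N$ as you suggest. And even if two components have the same pattern, whether a given separator vertex survives as a separator after restriction depends on which $k$ components you ultimately keep, so matching patterns still does not force matching post-restriction labels. A cleaner remedy is simply not to pass to the subgraph: keep the full graph with its $kN$-component separation, and record that at least $k$ of the components are pairwise isomorphic under the \emph{existing} labeling. This is all that the next lemma (Lemma~\ref{lem:uniform}) actually uses.
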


\begin{proof}
Let $s=O(\epsilon^{-2})$ be the bound on the component size for $\Family$ and $\epsilon$ given by Lemma~\ref{lem:component-density}, and let $N$ be the number of isomorphism classes of $s$-vertex labeled graphs in $\Family$ in which each vertex is labeled as an internal vertex or a separator vertex; note that, by Lemma~\ref{lem:component-density}, $N>0$. By Lemma~\ref{lem:component-density}, there is a graph $G'$ in $\Family$ with a separation into $kN$ components of size at most $s$, all of which have density at least $\Delta-\epsilon$. At least $k$ of these components must be isomorphic to each other; let $G$ be the union of those isomorphic components, with all the other components removed. Then $G$ has the stated properties, and as a subgraph of $G'$ it belongs to $\Family$.
\end{proof}

The labeling used in Lemma~\ref{lem:isomorphic} is not refined enough for our purposes. What we need is that, if $v$ is a vertex of the separator of $G$, then every separation component uses $v$ in the same way. More formally, we define a separation of a graph $G$ to be \emph{uniform} if every separation component in it is isomorphic to the same $s$-vertex graph (for some $s$), and if the vertices of $G$ can be labeled with the integers from $1$ to $s$ in such a way that all components are isomorphic as labeled graphs. Equivalently, a separation is uniform if (for some $t\le s$) the separation vertices may be given labels from $1$ to $t$ in such a way that the separation vertices within each separation component have distinct labels, and all pairs of separation components have an isomorphism that respects the labels. To achieve this, we use \emph{color coding}, a variant of the probabilistic method developed for solving subgraph isomorphism problems~\cite{AloYusZwi-JACM-95}.

\begin{lemma}
\label{lem:uniform}
Let $\Family$ be a minor-closed family with limiting density $\Delta$. Then for any $\epsilon>0$ and any integer $k>0$, $\Family$ contains a graph $G$, such that $G$ has a uniform separation into $k$ separation components of size $O(\epsilon^{-2})$ each of which has density at least $\Delta-\epsilon$. The hidden constant in the $O$-notation depends only on $\Family$.
\end{lemma}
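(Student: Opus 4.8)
The plan is to start from the graph supplied by Lemma~\ref{lem:isomorphic} and use color coding to promote its isomorphic separation into a uniform one. Concretely, I would first apply Lemma~\ref{lem:isomorphic} not with the target number $k$ of components but with a much larger number $m$ to be fixed later, obtaining a graph $G_0\in\Family$ with a separation into $m$ pairwise isomorphic separation components, each of size at most $s=O(\epsilon^{-2})$ and each of density at least $\Delta-\epsilon$. Let $H$ be the common labeled isomorphism type of these components, with $s_0\le s$ vertices which I identify with the set $\{1,\dots,s_0\}$.

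Next I would color the vertices of $G_0$ independently and uniformly at random with colors drawn from $\{1,\dots,s_0\}$, giving a coloring $\chi$, and call a separation component $C$ \emph{well colored} if there is a graph isomorphism $\phi_C\colon C\to H$ with $\chi(v)=\phi_C(v)$ for every vertex $v$ of $C$. The key point is that if every component in some sub-separation is well colored, then labeling each vertex by its color exhibits all those components as isomorphic to $H$ as labeled graphs, and in particular a separator vertex $v$ shared between components $C$ and $C'$ satisfies $\phi_C(v)=\chi(v)=\phi_{C'}(v)$, so $v$ plays exactly the same role in every component that contains it --- which is what uniformity demands. Because $C\cong H$, the colorings of $V(C)$ that make $C$ well colored are exactly the $|\mathrm{Aut}(H)|$ maps of the form $\alpha\circ\psi_0$ for a fixed isomorphism $\psi_0\colon C\to H$ and $\alpha\in\mathrm{Aut}(H)$; hence $\Pr[C\text{ well colored}]=|\mathrm{Aut}(H)|/s_0^{\,s_0}\ge s_0^{-s_0}=:p$, a positive constant depending only on $\Family$ and $\epsilon$ (through $s_0\le s=O(\epsilon^{-2})$).

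By linearity of expectation the expected number of well-colored components of $G_0$ is at least $mp$, so some coloring achieves at least $\lceil mp\rceil$ well-colored components; choosing $m=\lceil k/p\rceil$ then guarantees at least $k$ of them. Fixing such a coloring, I would let $G$ be the union of $k$ of the well-colored components and delete every other component; as a subgraph of $G_0$ this $G$ lies in $\Family$, its components are unchanged so they still have size $O(\epsilon^{-2})$ and density at least $\Delta-\epsilon$, and by the discussion above the restriction of $\chi$ to $V(G)$ witnesses that the resulting separation into these $k$ components is uniform.

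I expect the only delicate point to be the bookkeeping in the last paragraph: one must check that ``every chosen component is well colored'' really does match the definition of a uniform separation (distinct labels on the separator vertices within each component, coming from $\phi_C$ being a bijection, and a label-preserving isomorphism $\phi_{C'}\circ\phi_C^{-1}$ between any two chosen components), and that deleting the non-well-colored components can disturb neither the well-colored ones nor their densities. Everything else --- the probability computation and the choice of $m$ --- is routine, and it matters only that $p$ is a constant and $m$ is finite, since we need mere existence rather than an efficient construction.
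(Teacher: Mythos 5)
Your argument is correct and uses essentially the same color-coding idea as the paper's own proof of Lemma~\ref{lem:uniform}. The only cosmetic difference is that you color every vertex of $G_0$ with one of $s_0$ colors and ask for a full label-preserving isomorphism onto $H$, whereas the paper colors only the $t<s$ separator vertices with $t$ colors and compares against a fixed reference labeling; in both variants the per-component success probability is bounded below by a positive constant (your $|\mathrm{Aut}(H)|/s_0^{s_0}\ge s_0^{-s_0}$ versus the paper's $t^{-t}$), and linearity of expectation plus deleting the unlucky components finishes. One small circularity to repair: $p=|\mathrm{Aut}(H)|/s_0^{s_0}$ is only determined \emph{after} Lemma~\ref{lem:isomorphic} is invoked with the chosen $m$, so you cannot literally set $m=\lceil k/p\rceil$; instead fix the uniform bound $s=O(\epsilon^{-2})$ supplied by Lemma~\ref{lem:isomorphic} in advance, use $p\ge s^{-s}$ (which does depend only on $\Family$ and $\epsilon$), and take $m=\lceil k\, s^s\rceil$ --- this is the analogue of the paper's choice of $k(s-1)^{s-1}$ components. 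Finally, a typo: the label-preserving isomorphism from $C$ to $C'$ is $\phi_{C'}^{-1}\circ\phi_C$, not $\phi_{C'}\circ\phi_C^{-1}$.
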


\begin{proof}
Let $s=O(\epsilon^{-2})$ be the size of the components produced by Lemmas \ref{lem:component-density} and~\ref{lem:isomorphic}. By Lemma~\ref{lem:isomorphic}, there exists a graph $G'$ with a separation into $k(s-1)^{s-1}$ isomorphic separation components of size at most $s$ in which each component has density at least $\Delta-\epsilon$. Suppose that, for each separation component of $G'$, $t<s$ of the vertices are separator vertices, and the remaining $s-t$ vertices are internal to the component. We then choose, independently and uniformly at random for each vertex of the separator of $G'$, a label from $1$ to~$t$, and we also choose (arbitrarily rather than randomly) a labeling $L$ of a single isomorphic copy of the separation component (separate from $G'$) that places distinct labels from $1$ to $t$ on its separator vertices. For each separation component of $G'$, the probability is at least $t^{-t}$ that it has an isomorphism to $L$ such that the labels of the component in $G'$ match the labels in $L$, so the expected number of separation components with label-preserving isomorphisms of this type is at least $k(s-1)^{s-1}t^{-t}\ge k$. Therefore, there exists a labeling of $G'$ that matches or exceeds this expected value, and allows at least $k$ of the separation components of $G'$ to be given matching labels on their separator vertices. By keeping these $k$ matching components and deleting the rest, we find a subgraph $G$ of $G'$ that belongs to $\Family$ and has a uniform separation as required.
\end{proof}

Observe that, in a uniform separation of a simple graph, there can be no edges in which both endpoints belong to the separator, because that would result in the graph being a multigraph. Therefore, if the graph is connected, each separation component must have at least one non-separator vertex.
In the labeling defining a uniform separation of a graph $G$, we say that the label of a separator vertex is \emph{singular} if there is only one vertex in $G$ with that label, and \emph{plural} otherwise.

\begin{lemma}
\label{lem:singular}
Let $\Family$ be a minor-closed family with limiting density $\Delta$. Then for any $\epsilon>0$ and any integer $k>0$, $\Family$ contains a graph $G$, such that $G$ has a uniform separation into $k$ separation components of size $O(\epsilon^{-2})$ in which each component has density at least $\Delta-\epsilon$ and in which each separator vertex label is singular. The hidden constant in the $O$-notation depends only on $\Family$.
\end{lemma}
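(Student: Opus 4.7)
The plan is to refine the color-coding argument of Lemma~\ref{lem:uniform} with an additional layer of randomness that forces singularity of each separator label. I would begin by applying Lemma~\ref{lem:uniform} with parameters $\epsilon$ and an inflated component count $k'$ (to be chosen), obtaining a graph $G'\in\Family$ with a uniform separation into $k'$ isomorphic labeled components of size at most $s=O(\epsilon^{-2})$, each of density at least $\Delta-\epsilon$. In $G'$ each separator vertex bears a label in $\{1,\dots,t\}$ (for some $t\le s$), and for each $\ell$ the set $V_\ell$ of label-$\ell$ separator vertices has size $m_\ell$.

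To enforce singularity I would add a secondary random coloring: independently color each separator vertex of $G'$ uniformly at random with a color from $\{1,\dots,q\}$ for a suitable parameter $q$, and keep only those components whose label-$\ell$ separator vertex receives secondary color $1$ for every $\ell$. Each component is kept with probability $q^{-t}$, so the expected number kept is $k'/q^t$; choosing $k'=k\,q^t$ retains at least $k$ of them on average. Moreover, if the random coloring assigns secondary color $1$ to at most one vertex of each $V_\ell$ (a ``singularity event''), then the retained components use exactly the same separator vertex per label, yielding the singular uniform separation desired. A first-moment/union-bound argument should establish the joint existence of a coloring that both keeps $\ge k$ components and achieves singularity, provided $q$ is large compared with $\max_\ell m_\ell$ (so that the expected number of pairs of vertices in $V_\ell$ colliding on color $1$ is $o(1)$).

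The main obstacle is circularity: $q$ must grow with $\max_\ell m_\ell$, while $m_\ell$ can in turn grow with $k'=k\,q^t$ (in the worst case $m_\ell\le k'/2$, since each separator vertex belongs to at least two components). I would break the loop by using the total-multiplicity inequality of Lemma~\ref{lem:pulverize} embedded inside Lemma~\ref{lem:uniform} to bound the $m_\ell$ in terms of $\epsilon$ and $s$ rather than $k'$, and then selecting $q$ and $k'$ consistent with this bound. If that direct estimate turns out to be too weak, I would fall back to an inductive scheme that proves the stronger statement ``at most $d$ plural labels'' by downward induction on $d$: for each inductive step pick a plural label $\ell$, pigeonhole on $V_\ell$ to find a vertex $v_\ell^*$ used by at least $k'/m_\ell$ components, restrict to those components (keeping the graph in $\Family$ and only increasing its density, since a smaller separator is shared among fewer vertices), and appeal to the inductive hypothesis at level $d-1$ after re-inflating the component count via a fresh application of Lemma~\ref{lem:uniform}. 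The delicate accounting of the $k'$ inflation across all $t\le s$ label-elimination rounds, and the preservation of the density condition $\Delta-\epsilon$ throughout, is where the technical heart of the proof lies.
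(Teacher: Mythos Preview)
Your proposal has the right intuition but a genuine gap in both routes. The secondary-coloring approach is indeed circular, and your proposed fix---bounding $m_\ell$ via the multiplicity estimate of Lemma~\ref{lem:pulverize}---does not go through: by the time you reach the output of Lemma~\ref{lem:uniform} you have passed through several subgraph-selection steps (Lemmas~\ref{lem:component-density}, \ref{lem:isomorphic}, \ref{lem:uniform}), and the multiplicity bound on the original separation no longer controls the number of distinct vertices carrying a given label in what remains; $m_\ell$ really can be of order $k'$.

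Your fallback is closer to the paper's argument but is missing its key idea and contains an incoherent step. You only pigeonhole onto a single vertex $v_\ell^*$, which leaves you with $k'/m_\ell$ components---useless when $m_\ell$ is large. The paper resolves this with a two-case dichotomy: starting from $p^2$ components in which label $\lambda$ is plural, \emph{either} there are at least $p$ distinct vertices carrying label $\lambda$, in which case one picks a single component for each such vertex, so that $\lambda$ ceases to be a separator label at all (each $\lambda$-vertex now lies in only one component, hence becomes internal, and the component densities are unchanged); \emph{or} there are fewer than $p$ such vertices, in which case your pigeonhole gives a single vertex lying in at least $p$ components and $\lambda$ becomes singular. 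Either way at least $p$ components survive, so each of the at most $s$ labels costs a square root and an initial count of $k^{2^s}$ suffices. No re-inflation is needed; indeed, your proposed ``fresh application of Lemma~\ref{lem:uniform}'' would produce an entirely new graph and would undo the singularity already achieved on earlier labels.
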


\begin{proof}
Let $s=O(\epsilon^{-2})$ be the maximum size of the separation components produced by Lemma~\ref{lem:uniform} for $\Family$ and $\epsilon$. By Lemma~\ref{lem:uniform}, there exists a graph $G'$ in $\Family$ with a uniform separation into $k^{2^s}$ separation components of size at most $s$ in which each component has density at least $\Delta-\epsilon$.
We then consider each label of a separator vertex in $G'$ in turn; for each such label, we either make it singular or make it not be a separator vertex any more, at the expense of reducing the number of separation components to the square root of its former value. Our choice of $k^{2^s}$ separation components at the start of this process ensures that there will be at least $k$ separation components when we have completed the process.

So, suppose that we have $p^2$ separation components in which label $\lambda$ is plural, and we wish to find some subset of $p$ of the components such that, for the graph induced by that subset, $\lambda$ is either singular or not a separator vertex. We consider the number of different separator vertices that have the label $\lambda$. If this number is at least $p$, then we may choose a single separation component for each different vertex with that label, ending up with at least $p$ separation components overall;  the graph induced by these separation components remains uniformly separated and, in its separation, the vertices labeled $\lambda$ are no longer separation vertices because each belongs to a single separation component. Making these vertices into internal vertices of the separation components does not change the density of the components.

In the other case, there are fewer than $p$ separator vertices with label $\lambda$. Therefore, because there are $p^2$ separation components, one of the vertices labeled $\lambda$ must be shared by at least $p$ separation components. The graph induced by these separation components remains uniformly separated and in it $\lambda$ is singular.

Repeating this refinement process once for each of the labels of separator vertices produces a uniform separation in which all separator vertex labels are singular, as required.
\end{proof}

A graph with a uniform separation in which each separator vertex label is singular is just a more complicated way of describing a fan, and the density of the fan is at least as large as the density of the separation components it is formed from. However, in order to apply Lemma~\ref{lem:fan-minimality} we need the fan to have some more structure: the shared vertices of the fan should form a clique, the repeated parts of the fan should be connected, and each shared vertex should be adjacent to a repeated vertex.

\begin{lemma}
\label{lem:dense-fan}
Let $\Family$ be a minor-closed family with limiting density $\Delta$. Then for any $\epsilon>0$ there exists a graph $G$, and a proper subset $S$ of the vertices of $G$ such that
$\Fan(G,S,k)\in\Family$ for all positive $k$, such that $G\setminus S$ is connected and $S$ forms a clique, such that each vertex of $S$ is adjacent to a vertex in $G$, and such that for all sufficiently large $k$ the density of $\Fan(G,S,k)$ is at least $\Delta-\epsilon$.
\end{lemma}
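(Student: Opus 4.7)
The plan is to start from Lemma~\ref{lem:singular}, use pigeonhole to extract a single fan-piece usable for all $k$ simultaneously, and then perform three local modifications to arrange the additional structural conditions.

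First, apply Lemma~\ref{lem:singular} with parameter $\epsilon/2$ for a sequence of $k$ tending to infinity. Each application produces a graph $\Fan(G_k,S_k,k)\in\Family$ whose common piece $(G_k,S_k)$ is a labeled graph of size $O(\epsilon^{-2})$ and whose density is at least $\Delta-\epsilon/2$. Since only finitely many labeled graphs of bounded size exist, some fixed $(G_0,S_0)$ serves as $(G_k,S_k)$ for infinitely many $k$. For any $k^\ast\ge 1$, deleting $k-k^\ast$ of the replicated copies of $V(G_0)\setminus S_0$ from a large enough $\Fan(G_0,S_0,k)\in\Family$ exhibits $\Fan(G_0,S_0,k^\ast)$ as a subgraph, hence a minor, so $\Fan(G_0,S_0,k^\ast)\in\Family$ for every $k^\ast\ge 1$. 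The density of $\Fan(G_0,S_0,k)$, viewed as a function of $k$, has the form $(ak+b)/(ck+d)$, so it is monotonic with limit $a/c$; since it is at least $\Delta-\epsilon/2$ for arbitrarily large $k$, the limit $a/c$ is at least $\Delta-\epsilon/2$, and hence the density exceeds $\Delta-\epsilon$ for all sufficiently large~$k$.

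Next, three local surgeries of $(G_0,S_0)$ yield the remaining structure. \emph{(i)} If $G_0\setminus S_0$ has components $C_1,\dots,C_r$, let $G_i$ be the subgraph of $G_0$ induced on $V(C_i)\cup S_0$; the asymptotic density of $\Fan(G_0,S_0,k)$ is a weighted average of the asymptotic densities of $\Fan(G_i,S_0,k)$, so replacing $G_0$ by some $G_{i^\ast}$ attaining the maximum preserves the density bound and yields $\Fan(G_{i^\ast},S_0,k)$ as a subgraph of $\Fan(G_0,S_0,k)$. \emph{(ii)} Remove from $S_0$ any vertex with no neighbor in $C_{i^\ast}$; every such vertex is incident only to edges that lie inside $S_0$, so its deletion changes only a bounded number of vertices and edges of the fan, leaving both the asymptotic density and membership in $\Family$ intact. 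Writing the result as $(G_0',S_0')$, we have $G_0'\setminus S_0' = C_{i^\ast}$ connected and every vertex of $S_0'$ adjacent to $C_{i^\ast}$. \emph{(iii)} Apply Lemma~\ref{lem:fan-clique} to add all missing edges inside $S_0'$, producing $G_0''$; the lemma shows $\Fan(G_0'',S_0',k-c)$ is a minor of $\Fan(G_0',S_0',k)$, so $\Fan(G_0'',S_0',k')\in\Family$ for every $k'\ge 1$. The added edges lie entirely inside the shared clique, so they contribute only to the constant part of the numerator of the density formula and the density bound carries over. Take $(G,S)=(G_0'',S_0')$.

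The main obstacle is the first step. Lemma~\ref{lem:singular} constructs a separation for each $k$ independently, so its common piece may a priori depend on $k$; the uniform bound $O(\epsilon^{-2})$ on component sizes, which ultimately traces back to the Alon--Seymour--Thomas separator theorem, is precisely what makes pigeonhole applicable and produces a single fan-piece usable for every $k$. The three subsequent surgeries each modify only a bounded number of vertices and edges, so they do not disturb the $k\to\infty$ density limit, and they set things up so that Lemma~\ref{lem:fan-clique} can supply the clique condition on~$S$ at no asymptotic cost.
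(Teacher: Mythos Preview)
Your proposal is correct and follows essentially the same approach as the paper: invoke Lemma~\ref{lem:singular} with $\epsilon/2$, use pigeonhole on the bounded-size pieces to fix one $(G_0,S_0)$ working for all $k$, pass to the densest component of $G_0\setminus S_0$ via a weighted-average argument, prune $S_0$ down to the vertices actually adjacent to that component, and finally invoke Lemma~\ref{lem:fan-clique} to make $S$ a clique. The only cosmetic differences are that the paper tracks the density of the component $G$ directly (noting $m/(n-s)\ge m/n\ge\Delta-\epsilon/2$) rather than passing through the $(ak+b)/(ck+d)$ limit of the fan's density, and that it folds your steps (ii) and (iii) into a single construction of $G'$ on $G_i\cup S_i$; neither difference is substantive.
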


\begin{proof}
By Lemma~\ref{lem:singular} we can find, for any $k$, a graph $G$ with $O(\epsilon^{-2})$ vertices and a proper subset $S$ such that $G$ has density at least $\Delta-\epsilon/2$ and $\Fan(G,S,k)$ belongs to $\Family$. By choosing $G$ appropriately, we can additionally guarantee that $\Fan(G,S,k')$ also belongs to $\Family$ for all $k'>k$; for, if each of the finitely many choices for $G$ had a fixed bound on the number of times it could be repeated in a fan, this would contradict Lemma~\ref{lem:singular} for values of $k$ that exceeded the maximum of these bounds. Because $G$ and $\Fan(G,S,k)$ come from a uniform separation, there are no edges in the subgraph induced by $S$. Let $m$ be the number of edges in $G$, $n$ be the number of vertices in $G$, $s$ be the number of vertices in $S$, and $m_i$ and $n_i$ be the number of edges incident to and vertices in each connected component $G_i$ of $G\setminus S$. Then $m/(n-s)\ge m/n\ge\Delta-\epsilon/2$ and $m/(n-s)$ is a weighted average of the quantities $m_i/n_i$, so if $G_i$ is the component of $G\setminus S$ maximizing $m_i/n_i$ then $m_i/n_i\ge\Delta-\epsilon/2$.

Let $S_i$ be the set of vertices in $S$ incident to $G_i$, and let $G'$ have $G_i\cup S_i$ as its vertex set and have as its edges all the edges in $G$ that are incident to $G_i$ together with an edge between every two vertices in $S_i$. By Lemma~\ref{lem:fan-clique}, $\Fan(G',S_i,k)$ belongs to $\Family$ for all $k$. As required, $S_i$ forms a clique, $G'\setminus S_i$ is connected, and every vertex in $S_i$ has a neighbor that is not in $S_i$. As $k$ grows larger, the density of $\Fan(G',S_i,k)$ converges to $m_i/n_i\ge\Delta-\epsilon/2$, so for all sufficiently large $k$ it is larger than $\Delta-\epsilon$.
\end{proof}

Dense fans in $\Family$ can be used to generate density-minimal graphs whose densities approximate the limiting density of $\Family$.

\begin{lemma}
\label{lem:density}
Let $\Family$ be a minor-closed family with limiting density $\Delta$. Then for any $\epsilon>0$ there exists a density-minimal graph $G$ in $\Family$ whose density belongs to the closed interval $[\Delta-\epsilon,\Delta]$.
\end{lemma}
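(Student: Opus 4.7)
The plan is to combine Lemma~\ref{lem:dense-fan} with Lemma~\ref{lem:fan-minimality}: the first supplies a fan in $\Family$ of density at least $\Delta-\epsilon$, the second extracts from it a density-minimal graph that inherits this lower bound, and the remaining step is to verify that the density of the resulting graph does not exceed~$\Delta$.

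First I would invoke Lemma~\ref{lem:dense-fan} with the given $\epsilon$ to obtain a graph $G$ and a proper clique subset $S\subset V(G)$ such that $G\setminus S$ is connected, every vertex of $S$ has a neighbor in $G\setminus S$, $\Fan(G,S,k)\in\Family$ for every positive~$k$, and $\Fan(G,S,k)$ has density at least $\Delta-\epsilon$ for all sufficiently large~$k$. Fixing such a $k\ge 2$, I would apply Lemma~\ref{lem:fan-minimality} to $\Fan(G,S,k)$ to produce a densest minor of one of two forms: either $\Fan(G',S',k)$ with $S'\neq\emptyset$ and that fan density-minimal, or $\Fan(G',\emptyset,k)$ with $S'=\emptyset$ and $G'$ itself density-minimal. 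Let $H=\Fan(G',S',k)$ in the first subcase and $H=G'$ in the second. Then $H$ is density-minimal, lies in $\Family$ as (a component of) a minor of $\Fan(G,S,k)$, and inherits a density of at least $\Delta-\epsilon$.

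The main obstacle is the upper bound on the density of~$H$. In the subcase $S'\neq\emptyset$, observe that since $G'$ is a minor of $G$ the fan $\Fan(G',S',k')$ is a minor of $\Fan(G,S,k')$, and therefore lies in $\Family$ for every~$k'$. The density of $\Fan(G',S',k')$ has the form $(\binom{|S'|}{2}+k'm')/(|S'|+k'N)$, a fractional linear function of $k'$ and hence monotone with limit $d^{**}=m'/N$, where $m'$ denotes the number of edges of $G'$ incident to $G'\setminus S'$ and $N=|G'\setminus S'|$. Since $H=\Fan(G',S',k)$ is density-minimal and its proper minor $G'=\Fan(G',S',1)$ is strictly less dense, this function is strictly increasing in~$k'$, placing the density of $H$ strictly below~$d^{**}$. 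Because $\Fan(G',S',k')\in\Family$ has $\Theta(k')$ vertices with density tending to~$d^{**}$, the definition of limiting density forces $d^{**}\le\Delta$, so the density of $H$ is at most~$\Delta$. In the subcase $S'=\emptyset$, the minor $\Fan(G',\emptyset,k')$ consists of $k'$ disjoint copies of $G'=H$, lies in $\Family$ for every~$k'$, and has constant density equal to that of~$H$; applying the limiting-density hypothesis as $k'\to\infty$ again gives density of $H$ at most~$\Delta$.

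Combining the lower and upper bounds shows that $H$ is a density-minimal graph in $\Family$ with density in $[\Delta-\epsilon,\Delta]$, as required.
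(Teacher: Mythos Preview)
Your argument is correct, and your route to the upper bound is genuinely different from the paper's. Both proofs begin identically, combining Lemma~\ref{lem:dense-fan} with Lemma~\ref{lem:fan-minimality} to obtain a density-minimal graph $H$ (either $\Fan(G',S',k)$ or $G'$) in $\Family$ of density at least $\Delta-\epsilon$. Where they diverge is in bounding the density of $H$ from above by~$\Delta$. The paper invokes Lemma~\ref{lem:jump}, which rests on the Robertson--Seymour theorem, to find a gap $(\Delta,\Delta+\delta)$ containing no density of a density-minimal graph; it then chooses $k$ larger than the size of any graph in $\Family$ with density at least $\Delta+\delta$, so that the fan (having at least $k$ vertices by Lemma~\ref{lem:fan-lb}) is forced below $\Delta+\delta$ and hence, by the gap, to at most~$\Delta$. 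You instead exploit the fact that the minor map $G\to G'$ taking $S$ to $S'$ can be applied copy-wise, so that $\Fan(G',S',k')$ is a minor of $\Fan(G,S,k')\in\Family$ for \emph{every}~$k'$; the density of these fans is a monotone fractional-linear function of $k'$ with limit $d^{**}=m'/N$, and since the family contains arbitrarily large graphs of density approaching $d^{**}$, the limiting density satisfies $\Delta\ge d^{**}$. Density-minimality of $H$ and $k\ge 2$ then force the density of $H$ strictly below $d^{**}\le\Delta$.

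Your approach has the merit of avoiding the appeal to Robertson--Seymour at this point (it is still used elsewhere, e.g.\ in Theorem~\ref{thm:cwoc}), at the cost of needing the auxiliary observation that the minor $G\to G'$ lifts to $\Fan(G,S,k')\to\Fan(G',S',k')$ for all~$k'$; this is straightforward but not stated in the paper. Two small points worth tightening: make explicit that ``such a $k\ge 2$'' means $k$ is also large enough for the density bound from Lemma~\ref{lem:dense-fan} to apply; and the edge-count $\binom{|S'|}{2}$ tacitly assumes $S'$ is a clique in $G'$, which does hold (since $S$ is a clique and $\Fan(G',S',k)$ is a \emph{densest} minor) but deserves a word of justification.
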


\begin{proof}
By Lemma~\ref{lem:dense-fan} we can find $G$, $S$, and $k_0$ such that
$\Fan(G,S,k)\in\Family$ for all positive $k$, such that $\Fan(G,S,k)$ meets the conditions of Lemma~\ref{lem:fan-minimality}, and such that the density of $\Fan(G,S,k)$ is at least $\Delta-\epsilon$ for $k\ge k_0$.
By Lemma~\ref{lem:jump} there exists $\delta$ such that there are no density-minimal graphs with densities in the open interval $(\Delta,\Delta+\delta)$.
$\Family$ may contain graphs with density $\Delta+\delta$ or larger, but only finitely many of them; let $k_1$ be the number of vertices in the largest such graph, and set $k=\max(k_0,k_1+1)$.
By Lemma~\ref{lem:fan-minimality} there exist $G'$ and $S'$ such that $\Fan(G',S',k)$ is a densest minor of $\Fan(G,S,k)$, and therefore also has density at least $\Delta-\epsilon$; since $\Fan(G',S',k)$ has at least $k\ge k_1+1$ vertices (Lemma~\ref{lem:fan-lb}), it has density less than $\Delta+\delta$ and therefore its density is at most $\Delta$.
According to Lemma~\ref{lem:fan-minimality}, $G'$ and $S'$ can be chosen so that either $\Fan(G',S',k)$ is density-minimal, or $G'$ is density-minimal and has the same density as the fan; in either case we have found a density-minimal graph in $\Family$ whose density lies in the desired range.
\end{proof}

\section{Main results}

\begin{theorem}
\label{thm:cwoc}
The set of limiting densities of minor-closed graph families is countable, well-ordered, and topologically closed.
\end{theorem}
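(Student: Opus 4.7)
The plan is to identify $L$, the set of limiting densities, with the topological closure $\overline{D}$ of the set $D$ of densities of density-minimal graphs, and then to read off all three properties from this identification together with Lemma~\ref{lem:jump}.

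For the identification $L=\overline{D}$: the inclusion $D\subseteq L$ is Corollary~\ref{cor:cf}, and Lemma~\ref{lem:density} supplies, for every $\Delta\in L$ and every $\epsilon>0$, an element of $D$ in $[\Delta-\epsilon,\Delta]$, so $L\subseteq \overline{D}$. For the harder direction $\overline{D}\subseteq L$, given $\Delta\in\overline{D}$ I would take the minor-closed family $\Sparse(\Delta)$ of graphs all of whose minors have density at most $\Delta$. Its limiting density is trivially at most $\Delta$; for the lower bound, every density-minimal $G$ with density $d\le\Delta$ satisfies $\CF(G)\subseteq\Sparse(\Delta)$ (a disjoint union is no denser than its densest component), and $\CF(G)$ itself has limiting density $d$ by Lemma~\ref{lem:cf-density}. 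Because $\Delta\in\overline{D}$, such values $d$ accumulate at $\Delta$, so $\Sparse(\Delta)$ has limiting density exactly $\Delta$, placing $\Delta$ in $L$.

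With $L=\overline{D}$ in hand, topological closedness is immediate. For well-ordering I would argue by contradiction: a strictly decreasing sequence $\Delta_1>\Delta_2>\cdots$ in $L$ is bounded below by $0$ and converges to some $\Delta^*\geq 0$; Lemma~\ref{lem:jump} applied at $\Delta^*$ provides $\delta>0$ with $D\cap(\Delta^*,\Delta^*+\delta)=\emptyset$. Since every point of the open interval $(\Delta^*,\Delta^*+\delta)$ has a sub-neighborhood disjoint from $D$, the same interval contains no limit point of $D$, hence $L\cap(\Delta^*,\Delta^*+\delta)=\emptyset$; but $\Delta_i$ must eventually enter this interval from above, a contradiction. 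Countability follows from the standard observation that any well-ordered subset of $\mathbb{R}$ injects into $\mathbb{Q}$ by sending each non-maximal element to any rational strictly between it and its immediate successor.

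The main obstacle I expect is the direction $\overline{D}\subseteq L$, which requires producing a minor-closed family whose limiting density is \emph{exactly} equal to an arbitrary limit point of $D$, rather than merely bounded above or below by it. The family $\Sparse(\Delta)$ is the natural candidate, but matching the lower bound $\Delta$ on its limiting density relies on the fact that Lemma~\ref{lem:jump} forbids accumulation from above, so every point of $\overline{D}\setminus D$ is necessarily a limit from below of points in $D$---exactly the condition needed to pack $\Sparse(\Delta)$ with component-families $\CF(G)$ of densities approaching $\Delta$. A minor secondary subtlety in the well-ordering step is that $\Delta^*$ itself need not lie in $L$, but this is harmless because Lemma~\ref{lem:jump} is stated for all nonnegative reals.
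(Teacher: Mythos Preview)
Your argument is correct, but it takes a different route from the paper's. You first establish the identification $L=\overline{D}$ (which is the paper's Theorem~\ref{thm:dd}) and then read off all three properties; the paper does the reverse, proving countability, well-ordering, and closedness of $L$ directly and only afterwards deducing $L=\overline{D}$. The concrete differences are: for closedness, the paper takes an increasing sequence $\Delta_i\to\Delta$ of limiting densities of families $\Family_i$ and observes that $\bigcup_i\Family_i$ is minor-closed with limiting density $\Delta$, whereas you produce the single family $\Sparse(\Delta)$ and show its limiting density is exactly $\Delta$ by packing it with the families $\CF(G)$; for countability, the paper invokes Robertson--Seymour to bound the number of minor-closed families, whereas you use the order-theoretic fact that a well-ordered subset of $\mathbb{R}$ embeds in $\mathbb{Q}$. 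Your countability argument is more self-contained, though Robertson--Seymour is already implicit via Lemma~\ref{lem:jump}. Your $\Sparse(\Delta)$ construction is arguably cleaner than the union-of-families trick and has the pleasant feature of exhibiting a \emph{single} canonical family realizing each $\Delta\in\overline{D}$. One small point: your parenthetical ``a disjoint union is no denser than its densest component'' justifies only that graphs in $\CF(G)$ have density at most $\Delta$, not that all their minors do; the full justification is that $\CF(G)$ is minor-closed and density-minimality of $G$ bounds the density of every minor of $G$, but this is a one-line fix.
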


\begin{proof}
By the Robertson--Seymour theorem~\cite{RobSey-JCTSB-04}, according to which every minor-closed graph family can be characterized by a finite set of forbidden minors, there are countable many minor-closed families and therefore at most countably many limiting densities.

There can be no infinite descending sequence of limiting densities of minor-closed families, because if there were, there would be an infinite descending subsequence $\Delta_1$, $\Delta_2$,  $\Delta_3$, that converged to some limit $\Delta^*$. But then, for every $\delta$, there would be a limiting density $\Delta_i$ within $\delta/2$ of $\Delta^*$, and (by Lemma~\ref{lem:density}) a density-minimal graph $G$ with density within $\delta/2$ of $\Delta_i$, contradicting Lemma~\ref{lem:jump} according to which there is an open interval $(\Delta,\Delta+\delta)$ that does not contain the density of any density-minimal graph. Therefore, the set of limiting densities is well-ordered.

By well-ordering, any cluster point $\Delta$ of the set of limiting densities must be the limit of an increasing subsequence $\Delta_1<\Delta_2<\Delta_3\dots$ of limiting densities of minor-closed families $\Family_1$, $\Family_2$, $\Family_3$,~$\dots$. But then $\Family_1\cup\Family_2\cup\Family_3\dots$ is also minor-closed and has $\Delta$ as its limiting density; hence the set of limiting densities contains all its cluster points and is topologically closed.
\end{proof}

\begin{theorem}
\label{thm:dd}
The set of limiting densities of minor-closed graph families is the topological closure of the set of densities of density-minimal graphs.
\end{theorem}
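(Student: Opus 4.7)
The plan is to prove a two-way inclusion between the set $L$ of limiting densities of minor-closed families and the closure $\overline{D}$ of the set $D$ of densities of density-minimal graphs.

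For the inclusion $\overline{D}\subseteq L$, I would first observe that $D\subseteq L$ is already given by Corollary~\ref{cor:cf}, which exhibits, for each density-minimal graph $G$, the minor-closed family $\CF(G)$ whose limiting density is exactly the density of $G$. Then, since Theorem~\ref{thm:cwoc} tells us that $L$ is topologically closed, taking closures on both sides of $D\subseteq L$ gives $\overline{D}\subseteq L$.

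For the reverse inclusion $L\subseteq\overline{D}$, I would take an arbitrary limiting density $\Delta$ of a minor-closed family $\Family$ and show that $\Delta$ lies in $\overline{D}$ by approximating it arbitrarily well with elements of $D$. This is exactly what Lemma~\ref{lem:density} provides: for every $\epsilon>0$ there is a density-minimal graph (in $\Family$, though we only need it to be density-minimal) whose density lies in the closed interval $[\Delta-\epsilon,\Delta]$. Letting $\epsilon\to 0$ produces a sequence of elements of $D$ converging to $\Delta$, so $\Delta\in\overline{D}$.

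There is essentially no obstacle here, because all the heavy lifting was done earlier: Corollary~\ref{cor:cf} handles one direction, and the chain of lemmas culminating in Lemma~\ref{lem:density} (which relies on the separator machinery and the fan-minimality machinery) handles the other. The main conceptual point to make explicit in the writeup is simply that $\Delta$ is approached from below rather than from above, which is consistent with the one-sided gap statement of Lemma~\ref{lem:jump}; this asymmetry is harmless for the closure statement since convergence from one side still puts $\Delta$ in $\overline{D}$.
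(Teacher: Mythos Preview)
Your proposal is correct and follows essentially the same approach as the paper: invoke Corollary~\ref{cor:cf} together with the closedness of $L$ from Theorem~\ref{thm:cwoc} for one inclusion, and Lemma~\ref{lem:density} for the other. The paper's proof is just a more compressed version of what you wrote.
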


\begin{proof}
By Corollary~\ref{cor:cf}, the set of limiting densities of minor-closed graph families contains the set of densities of density-minimal graphs, and by Theorem~\ref{thm:cwoc} it contains the closure of this set.
By Lemma~\ref{lem:density} every limiting density of a minor-closed graph family belongs to the closure of the set of densities of density-minimal graphs.
\end{proof}

\begin{theorem}
\label{thm:increment}
Let $\Delta$ be the limiting density of a minor-closed graph family $\Family$. Then  $1+\Delta$ is a cluster point in the set of limiting densities.
\end{theorem}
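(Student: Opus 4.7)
The plan is to exhibit, for each $\eta>0$, a minor-closed family whose limiting density lies in the open interval $(\Delta+1-\eta,\Delta+1)$, which shows that $\Delta+1$ is an accumulation point of the set of limiting densities. The construction attaches a single apex vertex to many disjoint copies of a density-minimal graph in $\Family$ whose density is close to $\Delta$.

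Given $\eta>0$, first invoke Lemma~\ref{lem:density} with $\epsilon=\eta/2$ to find a density-minimal graph $G\in\Family$ whose density $\Delta_G$ lies in $[\Delta-\eta/2,\Delta]$; write $n_G=|V(G)|$ and $m_G=|E(G)|$, so $m_G=\Delta_G n_G$. For each integer $k\ge 1$, let $H_k$ be the graph obtained from $\Fan(G,\emptyset,k)$ by adding one new vertex adjacent to all $kn_G$ other vertices. A direct count gives $H_k$ the density $(\Delta_G+1)kn_G/(kn_G+1)$, which is strictly less than $\Delta_G+1$ and tends to $\Delta_G+1$ as $k\to\infty$.

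The main technical step is to verify that $H_k$ is density-minimal. Let $M$ be any proper minor of $H_k$, with $n^*$ vertices and $m^*$ edges. Deleting the apex vertex of $M$ (if it is still present after whatever minor operations produced $M$) leaves a minor of $\Fan(G,\emptyset,k)$; since every component of such a minor is in turn a minor of $G$, the density-minimality of $G$ implies that this non-apex part contributes at most $\Delta_G(n^*-1)$ edges to $M$. The apex vertex contributes at most $n^*-1$ further edges, so $m^*\le(\Delta_G+1)(n^*-1)$. A short calculation then shows that the density of $M$ is strictly less than that of $H_k$ whenever $n^*\le kn_G$, while if $n^*=kn_G+1$ then $M$ must omit at least one edge of $H_k$ and so again has strictly smaller density than $H_k$.

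With $H_k$ density-minimal, Lemma~\ref{lem:cf-density} applied to $\CF(H_k)$ produces a minor-closed family whose limiting density equals the density of $H_k$ exactly. Choosing $k$ large enough that this density exceeds $\Delta_G+1-\eta/2\ge\Delta+1-\eta$ completes the proof, since $\eta>0$ was arbitrary. The main obstacle is the density-minimality check for $H_k$: one must handle minor operations that contract the apex vertex with non-apex vertices (which still leave a universal-vertex structure so that the same edge bound applies), and carefully distinguish the vertex-reduction case from the edge-deletion-only case to get strict inequality in the density comparison in every case.
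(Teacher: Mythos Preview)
Your proof is correct and the underlying construction is the same as the paper's: take a density-minimal $G\in\Family$ with density close to $\Delta$ (via Lemma~\ref{lem:density}), and form the graph with a single apex attached to $k$ disjoint copies of $G$.  Your $H_k$ is exactly the paper's $\Fan(G',\{a\},k)$.

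The only genuine difference is in how the upper bound on the resulting limiting density is obtained.  You show directly that $H_k$ is density-minimal (the ``main obstacle'' you flag) and then appeal to Lemma~\ref{lem:cf-density} with $\CF(H_k)$.  The paper sidesteps this verification entirely: it defines the family $\Apex(\Delta,k)$ of graphs whose components have at most $k$ vertices and contain an apex whose removal lands in $\Sparse(\Delta)$, and reads off the upper bound $(1+\Delta)\frac{k-1}{k}$ straight from that definition.  The same edge-counting inequality (non-apex edges $\le \Delta(n^*-1)$, apex edges $\le n^*-1$) drives both arguments, but the paper applies it once to the whole family rather than to every minor of $H_k$.  Your route buys a slightly sharper statement (the exact limiting density of $\CF(H_k)$), at the cost of the case analysis on minors; the paper's route is a bit cleaner because it never needs $H_k$ to be density-minimal, only to witness a lower bound.
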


\begin{proof}
Let $\Sparse(\Delta)$ be defined as in the proof of Lemma~\ref{lem:jump} as the family of graphs with no minor denser than $\Delta$. Define $\Apex(\Delta,k)$ to be the family of graphs $G$ with the following two properties:
\begin{itemize}
\item Each connected component $G_i$ of $G$ has at most $k$ vertices, and
\item Within each connected component $G_i$ of $G$, one can find a vertex $a_i$ (the \emph{apex} of the component) such that $(G_i\setminus\{a_i\})\in \Sparse(\Delta)$.
\end{itemize}
$\Apex(\Delta,k)$ is minor-closed, and as we show below, the sequence of graph families $\Apex(\Delta,k)$ (for fixed $\Delta$ and variable $k$) has a sequence of limiting densities that converges to $1+\Delta$ but that does not ever actually reach $1+\Delta$.

Because the density of any graph in $\Sparse(\Delta)$ is at most $\Delta$, any connected component of a graph in $\Apex(\Delta,k)$ that has $i$ vertices has at most $\Delta(i-1)$ edges that are not incident to $a_i$, and another $i-1$ edges incident to $a_i$. Therefore, its density is at most $(1+\Delta)\frac{i-1}{i}$, and the limiting density of $\Apex(\Delta,k)$ is at most $(1+\Delta)\frac{k-1}{k}$. In particular, it cannot be the case that any family $\Apex(\Delta,k)$ has limiting density exactly equal to $\Delta+1$.

For any $\epsilon$ there exists a density-minimal graph $G$ in $\Family$ with density in the interval $[\Delta-\epsilon/2,\Delta]$, by Lemma~\ref{lem:density}. Because $G$ is density-minimal and has density at most $\Delta$, it belongs to $\Sparse(\Delta)$. Let $G$ have $m$ edges and $n$ vertices, and form a graph $G'$ by connecting each vertex in $G$ to a new vertex $a$.
The graphs $\Fan(G',\{a\},k)$ have density $(m+n)k/(nk+1)$, which approaches $1+m/n$ as $k$ becomes large, so for some sufficiently large $k$ the graph $H=\Fan(G',\{a\},k)$ has density at least $1+\Delta-\epsilon$. The graphs $\Fan(H,\emptyset,r)$ belong to $\Apex(\Delta,nk+1)$: each component of these graphs has $nk+1$ vertices, meeting the limit on the component size in $\Apex(\Delta,nk+1)$, and within each component of $\Fan(H,\emptyset,r)$ the vertex $a$ can be chosen as the apex. The family $\Apex(\Delta,nk+1)$ contains arbitrarily large graphs
$\Fan(H,\emptyset,r)$ with density at least $1+\Delta-\epsilon$, so its limiting density is within the interval $[1+\Delta-\epsilon,(1+\Delta)\frac{nk}{nk+1}]$ and the sequence of limiting densities of these families approaches $1+\Delta$.

We have found a sequence of minor-closed families of graphs with limiting densities approaching but not equalling $1+\Delta$, so $1+\Delta$ is a cluster point in the set of limiting densities.
\end{proof}

We say that a number $\Delta$ is an order-1 cluster point in the set of limiting densities if $\Delta$ is any cluster point of the set, and that $\Delta$ is an order-$i$ cluster point if $\Delta$ is a cluster point of order-$(i-1)$ cluster points.

\begin{theorem}
\label{thm:bigset}
The subset of limiting densities of minor-closed graph families that are less than $3/2$ is the set
$$\BigSet .$$
For the integers $i\ge 1$ and $j\ge 1$, the numbers $i+(j-1)/j$ are order-$i$ cluster points  in the set of limiting densities.
\end{theorem}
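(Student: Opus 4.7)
The statement comes in two halves, both of which follow from the groundwork already laid.

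For the characterization of limiting densities below $3/2$, the plan is to combine Theorem~\ref{thm:dd} with Lemma~\ref{lem:low-density}. Theorem~\ref{thm:dd} identifies the set of limiting densities with the topological closure of the set of densities of density-minimal graphs, and Lemma~\ref{lem:low-density} gives the density-minimal densities in the range $[0,3/2)$ as exactly $\BigSet$. It remains to verify that passing to the closure introduces no new numbers strictly less than $3/2$. The only accumulation points of $\BigSet$ are $1$ (the limit of $i/(i+1)$) and $3/2$ (the common limit of the three sequences indexed by $j\in\{0,1,2\}$); of these, $1$ already lies in $\BigSet$ (as the ratio $3/3$ with $i=1, j=0$), and $3/2$ is not less than $3/2$. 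Density-minimal graphs of density at least $3/2$ contribute only densities in $[3/2,\infty)$, so their closure adds nothing below the threshold either.

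For the cluster-point orders, the plan is an inductive shifting lemma: if $\Delta$ is an order-$m$ cluster point in the set of limiting densities, where we interpret ``order-$0$'' as meaning simply ``limiting density,'' then $1+\Delta$ is an order-$(m+1)$ cluster point. The base case $m=0$ is exactly Theorem~\ref{thm:increment}. For the inductive step, choose a sequence of order-$(m-1)$ cluster points $\Delta_1,\Delta_2,\dots$ converging to $\Delta$ with $\Delta_n\ne\Delta$ for every $n$; by the inductive hypothesis each $1+\Delta_n$ is an order-$m$ cluster point, the sequence $(1+\Delta_n)$ converges to $1+\Delta$, and its terms are pairwise distinct, so $1+\Delta$ is a cluster point of order-$m$ cluster points.

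With the shifting lemma available, the cluster-point half of the theorem falls out by iteration. For each $j\ge 1$ the number $(j-1)/j$ is a limiting density, since the path $P_{j-1}$ is a density-minimal graph of that density by Lemma~\ref{lem:low-density}. Applying the shifting lemma once yields that $1+(j-1)/j$ is an order-$1$ cluster point, and applying it $i$ times in total (for any $i\ge 1$) yields that $i+(j-1)/j$ is an order-$i$ cluster point, as claimed.

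No single step looks like a serious obstacle: both halves are assembled from previously established machinery. The two places deserving explicit care are the verification that $1$ already lies in $\BigSet$ (so that taking the closure in the first half does not silently enlarge the set below $3/2$) and the distinctness check in the inductive step of the shifting lemma (to ensure that the sequence $(1+\Delta_n)$ genuinely witnesses a cluster point rather than becoming eventually constant at $1+\Delta$).
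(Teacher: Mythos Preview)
Your proposal is correct and follows essentially the same approach as the paper: Theorem~\ref{thm:dd} together with Lemma~\ref{lem:low-density} for the first half, and induction via Theorem~\ref{thm:increment} for the second. You have simply made explicit two details the paper elides---the verification that taking the closure adds nothing new below $3/2$, and the ``shifting lemma'' that drives the inductive step from order-$m$ to order-$(m+1)$ cluster points---so your argument is a more fleshed-out version of the paper's one-sentence sketch rather than a different route.
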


\begin{proof}
The description of the limiting densities below $3/2$ follows from Theorem~\ref{thm:dd} and Lemma~\ref{lem:low-density}. The characterizations of the numbers $i+(j-1)/j$ as order-$i$ cluster points follows by induction on~$i$ using Theorem~\ref{thm:increment}, with the description of the subset of limiting densities that are less than or equal to 1 as a base case for the induction.
\end{proof}

\section{Multigraphs}

Instead of using simple graphs,
the theory of graph may be formulated in terms of \emph{multigraphs}, graphs in which two vertices may be connected by a \emph{bond} of two or more edges and in which a single vertex may have any number of \emph{self-loops}, edges that have only that vertex as its endpoints. In some ways, the theory for multigraphs is simpler: for instance, the forbidden minor for forests in the simple graph world is a triangle, whereas for forests it is a smaller graph with one vertex and one self-loop. This simplicity carries over to the theory of limiting densities as well.

\begin{theorem}
\label{thm:multi}
The limiting density of a minor-closed family $\Family$ of multigraphs can only be an integer, a superparticular ratio $i/(i+1)$ for a positive integer $i$, or unbounded.
\end{theorem}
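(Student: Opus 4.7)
The plan is to follow the strategy of Theorem~\ref{thm:dd}: reduce to classifying density-minimal multigraphs, then classify them. First, I would observe that the entire machinery of Sections 3--6 transfers essentially verbatim from simple graphs to multigraphs. Each of its ingredients---Robertson--Seymour well-quasi-ordering, the Alon--Seymour--Thomas separator theorem (which still applies because the underlying simple graph of any multigraph is a minor of it, obtained by deleting duplicated edges and self-loops), the gap lemma (Lemma~\ref{lem:jump}), the fan constructions, and the separator-to-fan pipeline culminating in Lemma~\ref{lem:density}---is phrased in terms of vertex counts, edge counts, and the minor ordering and never makes any use of simplicity. Consequently, if $\Family$ has finite limiting density $\Delta$, then $\Delta$ lies in the topological closure of the set of densities of density-minimal multigraphs.

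The substantive step is then to classify connected density-minimal multigraphs. A density-minimal multigraph is connected, because otherwise its maximum-density component would be a proper minor whose density equals that of the whole graph (the overall density being the weighted average of the component densities). Let $G$ be connected with $n$ vertices and $m$ edges. If $n=1$, then $G$ is a single vertex with $m$ self-loops, of integer density $m$, and removing any loop strictly lowers the density, so $G$ is density-minimal. For $n\geq 2$ I would rule out self-loops, multi-edges, and cycles in turn. The uniform observation is that whenever $m\geq n$, the proper minor of $G$ obtained by contracting a spanning tree to a single vertex (if $G$ has a self-loop), contracting one edge of a parallel pair (if $G$ has a multi-edge), or contracting $\ell-1$ of the $\ell$ edges of a cycle of length $\ell$ (if $G$ is simple with a cycle), has density at least $m/n$, by elementary arithmetic of the form $(m-k)/(n-k)\geq m/n \Leftrightarrow m\geq n$ for $0<k<n$. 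Moreover, the presence of a self-loop, a multi-edge, or a cycle in a connected multigraph with $n\geq 2$ vertices already forces $m\geq n$. Hence a density-minimal $G$ with $n\geq 2$ must be simple, acyclic, and connected---i.e., a tree, of density $(n-1)/n$.

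Thus the densities of connected density-minimal multigraphs are exactly the superparticular ratios $(n-1)/n$ for $n\geq 1$ together with the nonnegative integers $0,1,2,\dots$; the only cluster point is $1$, which is already in the set, so this set equals its own closure. Combined with the reduction from the first paragraph, every finite limiting density of a minor-closed multigraph family is either a nonnegative integer or a superparticular ratio $i/(i+1)$ with $i\geq 1$, and otherwise the family has unbounded limiting density, matching the statement of the theorem. The main obstacle is not the case analysis for density-minimal multigraphs, which is routine, but rather the careful check that each lemma in Sections 3--6 survives in the multigraph setting---in particular, that the separator theorem and fan constructions tolerate self-loops and parallel edges. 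This is bookkeeping rather than new mathematics, since those proofs are already written in the generic language of vertex and edge counts under the minor ordering.
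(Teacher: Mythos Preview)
Your approach is the same as the paper's: reduce to density-minimal multigraphs via the Section~3--6 machinery, then observe that the only density-minimal multigraphs are trees and single-vertex graphs with loops. Your classification argument is a fine variant of the paper's (the paper simply notes that in the multigraph world contraction never deletes edges, so any connected multigraph with a cycle, loop, or bond has as its densest minor a single vertex carrying all the rank).

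There is, however, one point where your claim that the earlier proofs ``never make any use of simplicity'' is inaccurate, and it is precisely the point the paper singles out. Lemma~\ref{lem:isomorphic} relies on there being only finitely many isomorphism classes of $s$-vertex (labeled) graphs in $\Family$; for simple graphs this is automatic, but for multigraphs it fails outright if edge multiplicities are unbounded. The paper deals with this up front: if $\Family$ contains bonds of arbitrarily large multiplicity, the limiting density is unbounded; otherwise multiplicities are bounded, whence only finitely many $n$-vertex multigraphs lie in $\Family$, and \emph{that} is what makes the counting arguments go through. Your hypothesis ``finite limiting density'' does imply bounded multiplicity, so the repair is easy---but you should state it rather than asserting that simplicity is nowhere used. (A second, minor place simplicity appears is the observation before Lemma~\ref{lem:singular} that no edge can lie entirely inside the separator of a uniform separation; in the multigraph setting such edges are harmless since repetition just yields parallel edges, so the downstream argument in Lemma~\ref{lem:dense-fan} can be adjusted, but again it is not literally verbatim.)
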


\begin{proof}
If $\Family$ includes bonds with arbitrarily large numbers of edges, then its limiting density is unbounded. Otherwise, there can be at most finitely many different multigraphs in $\Family$ that have a fixed number of vertices, the only ingredient needed to make the counting arguments in our proofs go through. So, multigraphs in $\Family$ obey a separator theorem (obtained by applying the Alon--Seymour--Thomas separator theorem to the underlying simple graph of the multigraph), they contain dense fans of multigraphs, and their densities are approximated by the densities of finite density-minimal multigraphs. But if a connected multigraph contains a cycle, self-loop, or bond, then its densest minor is just a single vertex with density equal to its rank, because in the multigraph world edge contractions do not lead to the removal of any other edges, and therefore can only increase the density of a graph with at least as many edges as vertices. Therefore, the only density-minimal multigraphs are the trees and the single-vertex multigraphs, and the only possible densities that can be obtained from them are the ones in the statement of the theorem.
\end{proof}

More specifically, if $\Family$ can have a single connected component with arbitrarily high rank, its limiting density is unbounded. Otherwise, let $r$ be the largest number such that graphs in $\Family$ can have an unbounded number of rank-$r$ connected components. If $r$ is nonzero, the limiting density of $\Family$ is $r$. If $r$ is zero, and $\Family$ can have a single connected component with unbounded size, its limiting density is one. And if $r$ is zero and all components of $\Family$ have bounded size, the limiting density of $\Family$ is $i/(i+1)$, where $i$ is the number of edges in the largest tree $T$ such that $\Fan(T,\emptyset,k)\in\Family$ for all $k$.

\section{Conclusions}

We have investigated the structure of the set of limiting densities of minor-closed graph families; this set is topologically closed and well-ordered, contains cluster points of cluster points, and its exact members are known up to the threshold $3/2$. However, our results open many additional questions for investigation:
\begin{itemize}
\item Are all limiting densities rational?
\item Is every limiting density equal to the density of a density-minimal graph, or are there cluster points in the set of limiting densities that are not themselves densities of density-minimal graphs? A positive answer to this question would also imply that all limiting densities are rational.
\item If $x=p/q$ is a limiting density, where $p$ and $q$ are integers,  can the size of the gap between $x$ and the next larger limiting density be lower-bounded by a nonzero closed-form expression in terms of $p$ and $q$? Due to the existence of cluster points, some dependence on $q$ seems to be necessary: a monotonic function of $x$ alone would have to approach zero as $x$ approaches the first cluster point, and could not give a nontrivial lower bound on the gaps for higher values of $x$.
\item Are there any other cluster points between $3/2$ and $2$ other than the ones of the form $2-1/i$ that we have already identified?
\item What is the order type of the set of limiting densities? Since the limiting densities contain cluster points of any finite order, the order type is at least $\omega^\omega$; is this the exact order type?
\item What is the smallest cluster point of order $i$ for each integer~$i$? Is it the number $i$ itself?
\item If $\Delta$ is a cluster point of limiting densities, must $\Delta-1$ be a limiting density? If so, it would follow that all limiting densities are rational, that $i$ is the smallest cluster point of order~$i$, and that the order type of the set of limiting densities is $\omega^\omega$.
\end{itemize}

\subsection*{Acknowledgements}

This work was supported in part by NSF grant
0830403 and by the Office of Naval Research under grant
N00014-08-1-1015.  We thank {\'E}ric Fusy for calling our attention to reference~\cite{BerNoyWel-JCTB-10}, and an anonymous referee for many helpful suggestions.

\raggedright
\bibliographystyle{abuser}
\bibliography{minor-densities}
\end{document}